\pgfplotsset{compat=1.18} 
\newtheorem{theorem}{Theorem}[section] 
\newtheorem{lemma}[theorem]{Lemma} 
\numberwithin{equation}{section} 
\title[Semi-Lagrangian schemes for dispersive conservation laws]{Error estimates for semi-Lagrangian schemes with higher-order interpolation for conservation laws with dispersive terms} 
\author{Haruki Takemura} 
\email{takemura@ms.u-tokyo.ac.jp} 
\date{} 
\DeclareMathOperator{\cn}{cn} 
\def\norm#1#2{\left\|#2\right\|_{#1}} 
\def\snorm#1#2{\left|#2\right|_{#1}} 
\newcommand{\relmiddle}{\mathrel{}\middle|\mathrel{}} 
\newcommand{\pxa}{0.002} 
\newcommand{\pya}{0.0004} 
\newcommand{\pxb}{0.01} 
\newcommand{\pyb}{0.00005} 
\begin{document} 
\begin{abstract} 
  We establish error estimates for semi-Lagrangian schemes for the initial value problem of one-dimensional conservation laws with a dispersive term, including the Korteweg--de Vries equation. The schemes considered in this paper are based on the semi-Lagrangian technique combined with spatial discretization by higher-order interpolation operators. For the semi-Lagrangian schemes equipped with the spline or Hermite interpolation operators of order $ 2 s - 1 $, we derive an $L^2$-error estimate of $ O (\Delta t^r + h^{2s} / \Delta t) $ and an $ H^s $-error estimate of $ O (\Delta t^r + h^{s} / \sqrt{\Delta t}) $, where $ h $ and $ \Delta t $ denote the spatial mesh size and the time step size, respectively, and $ r \in \lparen 0, 1\rbrack $ is a parameter determined by the discretization of the dispersive term. A key step in the analysis is to establish the stability of the interpolation operators. Under suitable assumptions, interpolation operators of order $ 2s - 1 $ are stable with respect to the $ H^s $-norm as well as a weighted $ H^s $-norm. The weighted $H^s$-norm depends on $h$ and $\Delta t$, and it reduces to the $L^2$-norm in the limit $ h \to 0 $. 
\end{abstract} 

\maketitle 

\section{Introduction} 
 
The initial value problem of the dispersive conservation law is given by 
\begin{empheq}[left={\empheqlbrace}]{alignat=2} 
  \partial_t u + \partial_x F (u) + \nu \partial_x^3 u = 0 \quad & \text{in} \quad \mathbb{T} \times (0, T), \label{ivp01} \\ 
  u (0) = u_0 \quad & \text{in} \quad \mathbb{T}, \label{ivp02} 
\end{empheq} 
where $ \nu $ is a positive constant, $ \mathbb{T} $ denotes $ \mathbb{R} / \mathbb{Z} $, and $ F: \mathbb{R} \to \mathbb{R} $ is a sufficiently smooth function. A typical example is the Korteweg--de Vries (KdV) equation obtained by substituting $ F (u) = \frac{1}{2} u^2 $. Throughout this paper, suppose that the problem \eqref{ivp01}--\eqref{ivp02} admits a unique smooth solution. 

We consider the fully semi-Lagrangian (SL) schemes for the initial value problem \eqref{ivp01}--\eqref{ivp02} proposed in \cite{25Fe}.  Before considering schemes for \eqref{ivp01}--\eqref{ivp02}, we briefly recall the main ideas underlying fully SL approaches. The fully SL schemes have been extensively studied as numerical methods for diffusive conservation laws; see, for example, \cite{10Fe,14FaFe}. The derivation of the schemes for diffusive conservation laws is based on the probabilistic representation of solutions via the Feynman--Kac formula, as explained in \cite{02Mi,04MiTr}, where the schemes are introduced as layer methods. As emphasized in \cite{04MiTr}, although the derivation is based on stochastic discussion, the resulting schemes are entirely deterministic. Moreover, problems with the Dirichlet and Neumann boundary conditions have been treated in this framework in \cite{02MiTr,01MiTr}. 

In \cite{25Fe}, the fully SL schemes are generalized to conservation laws containing higher-order derivative terms 
\begin{empheq}[left={\empheqlbrace}]{alignat=2} 
  \partial_t u + \partial_x F (u) + \nu \partial_x^l u = 0 \quad & \text{in} \quad \mathbb{T} \times (0, T), \label{givp1} \\ 
  u (0) = u_0 \quad & \text{in} \quad \mathbb{T}, \label{givp2} 
\end{empheq} 
where $ l \geq 3 $ is an integer. For the case $ l = 3 $, two fully SL schemes were proposed as follows. Let $ N_x, N_t \in \mathbb{N} $, $ 0 = x_0 < x_1 < \cdots < x_{N_x} = 1 $, $ \Delta t = T / N $, and $ t_n = n \Delta t $. We denote the numerical approximation for $ u (x_j, t_n) $ by $ u_j^n $ and denote the vector $ (u_1^n, \ldots, u_{N_x}^n) $ by $ U^n $. Let $ h = \max_{j} |x_{j} - x_{j-1}| $, and let $ \tilde{\mathcal{I}}_h: \mathbb{R}^{N_x} \to C (\mathbb{T}) $ be an interpolation operator satisfying $ \tilde{\mathcal{I}}_h[(v_1, \ldots, v_{N_x})] (x_j) = v_j $ for all $ j \in \{1, \ldots, N_x\} $. Thus, $ \tilde{\mathcal{I}}_h[U^n]:\mathbb{T} \to \mathbb{R} $ represents the numerical solution at $ t = t_n $. In the numerical experiments of \cite{25Fe}, the linear interpolation and the cubic spline interpolation are used as $ \tilde{\mathcal{I}}_h $. In addition to these, we also consider the higher-order spline and Hermite interpolations in the present paper. 

We denote $ \partial_u F (u) $ by $ f (u) $. Then we can rewrite \eqref{ivp01}--\eqref{ivp02} as 
\begin{empheq}[left={\empheqlbrace}]{alignat=2} 
  \partial_t u + f (u) \partial_x u + \nu \partial_x^3 u = 0 \quad & \text{in} \quad \mathbb{T} \times (0, T), \label{ivp1} \\ 
  u (0) = u_0 \quad & \text{in} \quad \mathbb{T}. \label{ivp2} 
\end{empheq} 
The fully SL schemes for the problem \eqref{ivp1}--\eqref{ivp2} can be written as 
\begin{empheq}[left={\empheqlbrace}]{alignat=2} 
  u_j^0 & = u_0 (x_j), \label{slj1} \\ 
  u_j^{n} & = \sum_{(\gamma, \lambda) \in \Lambda} \gamma \tilde{\mathcal{I}}_h [U^{n-1}] (x_j - f (u_j^{n}) \Delta t + \lambda (\nu \Delta t)^{1/3}). \label{slj2} 
\end{empheq} 
In \cite{25Fe}, two choices for the parameter set $ \Lambda $ are proposed. One is given by 
\begin{align} 
  \Lambda_4 = \left\{\left(- \frac{1}{4}, \sqrt[3]{4}\right), \left(\frac{1}{4}, 0\right), \left(\frac{3}{4}, \sqrt[3]{4}\right), \left(-\frac{1}{4}, 2 \sqrt[3]{4}\right) \right\}, \label{g4} 
\end{align} 
and the other is given by 
\begin{align} 
  \Lambda_5 = \left\{ \left( \frac{3}{16}, -2 \right), \left( \frac{3}{8}, 0 \right), \left( \frac{3}{4}, 2 \right), \left( - \frac{3}{8}, 4 \right), \left( \frac{1}{16}, 6 \right) \right\}. \label{g5} 
\end{align} 
The derivation of the scheme \eqref{slj1}--\eqref{slj2} does not rely on the Feynman--Kac formula. Instead, the parameter sets $ \Lambda_4 $ and $ \Lambda_5 $ are chosen so that the resulting schemes satisfy appropriate consistency and stability conditions. See Section 3 of \cite{25Fe} for details on the construction of the parameter set $\Lambda$. As shown in \cite{25Fe}, the local truncation error of the scheme \eqref{slj1}--\eqref{slj2} with a $ q $-th order interpolation operator is $ O (\Delta t^{1/3} + h^q / \Delta t) $ for $\Lambda_4$ and $O(\Delta t^{2/3} + h^q / \Delta t)$ for $ \Lambda_5 $. 

In the literature, both explicit and implicit fully SL schemes have been studied (see, for example, \cite{00MiTr,02Mi}). However, in this paper, we restrict ourselves to the implicit one, which enables us to treat solutions with large gradients. Due to the implicit nature of the scheme, we use an iteration method to solve \eqref{slj2} for $ u_j^n $. However, this does not cause significant difficulty, since the unknown of \eqref{slj2} is a single scalar $ u_j^n $ for each $ (j,n) \in \{1,\ldots,N_x\} \times \{1,\ldots,N_t\} $. For fixed $ n $, once $ U^{n-1} $ has been obtained, the values $ u_j^n $ ($ j \in \{1,\ldots,N_x\} $) can be computed in parallel. 

SL schemes possess advantages that are well suited for nonlinear dispersive conservation laws. As a first example, SL methods can use relatively large time steps. Moreover, according to the numerical experiments reported in \cite{25Fe}, the implicit SL scheme with cubic spline interpolation is more efficient than certain finite difference schemes, that is, it achieves smaller errors for the same CPU time. 

Several types of interpolation have been employed in the construction of SL schemes. SL schemes with spline interpolation have been discussed in a number of earlier studies \cite{76Pu,90Be,99SoRoBeGh}. Whereas spline interpolation is global, one may use local interpolations such as the Hermite interpolations. The cubic interpolated pseudo-particle (CIP) scheme is a SL scheme that employs the cubic Hermite interpolation. Early works \cite{85TaNiYa,87TaYa} studied the CIP schemes for advection equations and hydrodynamic equations. A CIP scheme for the KdV equation was later proposed in \cite{91YaAo}, although it is different from \eqref{slj1}--\eqref{slj2}. Even for diffusive conservation laws, there are only a few theoretical results concerning fully SL schemes equipped with the Hermite interpolation operators. Mathematical analysis of the CIP scheme for the linear advection equation has been conducted in \cite{02NaTa,15TaFuNiIs,24KaTa}. Convergence of SL schemes with Hermite interpolations for Vlasov--Poisson equations was investigated in \cite{08Be}. 

In the present paper, we establish $ L^2 $- and $ H^s $-error estimates for the scheme \eqref{slj1}--\eqref{slj2} with higher-order interpolation operators. We impose a few conditions on the interpolation operator $ \tilde{\mathcal{I}}_h $, and these conditions are satisfied when the spline or Hermite interpolations are employed. In our previous paper \cite{25Ta}, we derived error estimates for fully SL schemes for diffusive conservation laws. For the scheme \eqref{slj1}--\eqref{slj2}, we can adopt a similar strategy. A key lemma asserts that the spline or Hermite interpolations of order $ 2 s - 1 $ are stable with respect to the $ H^s $-norm and the weighted $ H^s $-norm defined later. We note that the weighted $ H^s $-norm becomes the $ L^2 $-norm in the limit $ h \to 0 $. 

This paper is organized as follows. In Section \ref{sec:conti}, we reformulate the scheme \eqref{slj1}--\eqref{slj2} in terms of continuous functions on $ \mathbb{T} $ as a preparation for error analysis. In Section \ref{sec:main}, we provide error estimates. In Section \ref{sec:proof}, we give the proofs of the main theorems in Section \ref{sec:main}. Numerical results are presented in Section \ref{sec:test}. 

\section{Reformulation of the fully semi-Lagrangian scheme} \label{sec:conti} 
 
\subsection{Derivation of the scheme} 
 
We introduce some notation. We denote the norms of Sobolev spaces $ L^p (\mathbb{T}) $ and $ W^{s, p} (\mathbb{T}) $ by $ \norm{0, p}{\, \cdot \,} $ and $ \norm{s, p}{\, \cdot \,} $, respectively. We also use the Sobolev seminorm $ \snorm{s, p}{\, \cdot \,} = \norm{0, p}{\partial_x^s (\, \cdot \,)} $. For a positive integer $ s $, we define 
\begin{align} 
  \norm{s, 2, \ast}{v} = \left(\norm{0,2}{v}^2 + \snorm{s,2}{v}^2\right)^{1/2}, 
\end{align} 
which is equivalent to the standard $ H^s $-norm. We further define the norm $ \norm{s, 2, \Delta}{\,\cdot\,} $, which depends on the discretization parameter $ \Delta = (h, \Delta t) $, by 
\begin{align} 
  \norm{s, 2, \Delta}{v} = \left(\norm{0, 2}{v}^2 + \frac{h^{2s}}{\Delta t} \snorm{s, 2}{v}^2 \right)^{1/2}, \label{nsd} 
\end{align} 
and refer to it as the weighted $ H^s $-norm. We denote the Bochner space $W^{k,p}(0,T; H^s(\mathbb{T}))$ by  $W^{k,p} (H^s)$. 

In this paper, $ C $ denotes generic positive constants, independent of $ h $ and $ \Delta t $, whose value may change from line to line. We say that a positive quantity $ Q $ depends polynomially on $ A_1,\ldots,A_l $ when there exists a polynomial $ \Phi(A_1,\ldots,A_l) $ such that 
\begin{align} 
  Q \le \Phi(A_1,\ldots,A_l). 
\end{align} 
We use $ C_{\mathrm{P}} (A_1,\ldots,A_l) $ to denote a generic constant which depends polynomially on $ A_1,\ldots,A_l $. 

Let us briefly recall the derivation of the fully SL scheme \eqref{slj1}--\eqref{slj2}. Let a function $ w: \mathbb{T} \times [0, T] \ni (x,t) \mapsto w (x,t) \in \mathbb{R} $ be Lipschitz continuous with respect to $ x $. For $ (x, t) \in \mathbb{T} \times [0, T] $, let us define the trajectory $ X [w]: (0, T) \to \mathbb{T} $ by 
\begin{empheq}[left={\empheqlbrace}]{alignat=2} 
  & \frac{\mathrm{d}}{\mathrm{d} s} X [w] (s; x, t) = w (X [w] (s; x, t), s) \quad & \text{in} \quad \mathbb{T} \times (0, T), \\ 
  & X [w] (t; x, t) = x. 
\end{empheq} 
Let $ u $ be a smooth solution to \eqref{ivp1}--\eqref{ivp2}, and let $ x \in \mathbb{T} $ be fixed. 
For any $ s \in [0, T] $ and a sufficiently smooth function $ v: \mathbb{T} \times (0, T) \to \mathbb{R} $, we have 
\begin{align} 
  \frac{\mathrm{d}}{\mathrm{d} s} u (X_s, s) = \partial_t u (X_s, s) + f (u (X_s, s)) \partial_x u (X_s, s), \label{md} 
\end{align} 
where $ X_s = X [f \circ u] (s;  x, t_n) $. We denote $ u (\, \cdot \, , t_n) $ by $ u^n $. Integrating \eqref{md} with respect to $ s $, we have 
\begin{align} 
  u^n (x) & = u^{n-1} (X [f \circ u] (t_{n-1}; x, t_n)) \\ & \quad + \int_{t_{n-1}}^{t_n} \left[\partial_t u (X_s, s) + f (u (X_s, s)) \partial_x u (X_s, s)\right] \mathrm{d} s. 
\end{align} 
Using \eqref{ivp1}, we have 
\begin{align} 
  u^n (x) & = u^{n-1} (X [f \circ u] (t_{n-1}; x, t_n)) - \nu \int_{t_{n-1}}^{t_n} \partial_x^3 u (X [f \circ u] (s; x, t_n), s) \mathrm{d} s. \label{de1} 
\end{align} 

We now approximate the second term on the right-hand side of \eqref{de1}. By \eqref{g4}, $ \Lambda_4 $ satisfies 
\begin{align} 
  \frac{1}{k!} \sum_{(\gamma, \lambda) \in \Lambda_4} \gamma \lambda^k = \begin{cases} 
    1 & (k = 0), \\
    0 & (k = 1, 2), \\ 
    - 1 & (k = 3), \label{g4s} 
  \end{cases} 
\end{align} 
where we use $ 0! = 1 $. Similarly, \eqref{g5} implies 
\begin{align} 
  \frac{1}{k!} \sum_{(\gamma, \lambda) \in \Lambda_5} \gamma \lambda^k = \begin{cases} 
    1 & (k = 0), \\ 
    0 & (k = 1, 2, 4), \\ 
    - 1 & (k = 3). \label{g5s} 
  \end{cases}
\end{align} 
By \eqref{g4s}, \eqref{g5s} and Taylor's theorem, we have 
\begin{align} 
  \sum_{(\gamma, \lambda) \in \Lambda} \gamma v (x + \lambda (\nu \Delta t)^{1/3}) & = \sum_{k = 0}^{4} \sum_{(\gamma, \lambda) \in \Lambda} \frac{1}{k !} \gamma \lambda^k (\nu \Delta t)^{k / 3} \partial_x^k v (x) + O ((\nu \Delta t)^{5/3}) \\ 
  & = \begin{cases} 
    v (x) - \nu \Delta t \partial_x^3 v (x) + O ((\nu \Delta t)^{4/3}) &  (\Lambda = \Lambda_4), \\
    v (x) - \nu \Delta t \partial_x^3 v (x) + O ((\nu \Delta t)^{5/3}) & (\Lambda = \Lambda_5). 
  \end{cases} 
\end{align} 
Thus, we have 
\begin{align} 
  & u^{n-1} (X [f \circ u] (t_{n-1}; x, t_n)) - \nu \Delta t \partial_x^3 u^{n-1} (X [f \circ u] (t_{n-1}; x, t_n)) \\
  & = \sum_{(\gamma, \lambda) \in \Lambda} \gamma u^{n-1} (X [f \circ u] (t_{n-1}; x, t_n) + \lambda (\nu \Delta t)^{1/3}) + O ((\nu \Delta t)^{1 + r}), \label{de3} 
\end{align} 
where 
\begin{align}
  r = \begin{cases}
    \frac{1}{3} & (\Lambda = \Lambda_4), \\ 
    \frac{2}{3} & (\Lambda = \Lambda_5). 
  \end{cases}
\end{align}
We also use the approximation 
\begin{align} 
  \int_{t_{n-1}}^{t_n} \partial_x^3 u (X [f \circ u] (s; x, t_n), s) \mathrm{d} s = \Delta t \partial_x^3 u^{n - 1} (X [f \circ u] (t_{n-1}; x, t_n)) + O (\Delta t^2). \label{de2} 
\end{align} 
Applying \eqref{de3} and \eqref{de2} to \eqref{de1}, we obtain 
\begin{align} 
  u^n (x) & = \sum_{(\gamma, \lambda) \in \Lambda} \gamma u^{n-1} (X [f \circ u] (t_{n-1}; x, t_n) + \lambda (\nu \Delta t)^{1/3}) + O (\Delta t^{1 + r}). \label{de4} 
\end{align} 
Furthermore, by the backward Euler method, we have the approximation 
\begin{align} 
  X [f \circ u] (t_{n-1}; x, t_n) \approx x - f (u^n (x)) \Delta t. \label{de5} 
\end{align} 
Applying \eqref{de5} to \eqref{de4}, and then replacing $ u^n $ by $ \bar{u}^n $, we obtain the semi-discrete scheme in time 
\begin{align}
  \bar{u}^n (x) = \sum_{(\gamma, \lambda) \in \Lambda} \gamma \bar{u}^{n-1} (x - f (\bar{u}^n (x)) \Delta t + \lambda (\nu \Delta t)^{1/3}). \label{sd} 
\end{align} 
By introducing the spatial interpolation $ \tilde{\mathcal{I}}_h $, we obtain the fully discrete scheme \eqref{slj1}--\eqref{slj2}. This completes the derivation of the scheme. 

\subsection{Reformulation of the scheme} 

The scheme \eqref{slj1}--\eqref{slj2} is written in terms of the numerical solution at the discrete grid points $ (x_j, t_n) $. In this subsection, we rewrite this scheme in terms of the numerical solutions $ u_h^n $ defined on the domain $ \mathbb{T} $. For any $ v \in C (\mathbb{T}) $, we define $ X_{\Delta t}^1 [v]: \mathbb{T} \to \mathbb{T} $ by 
\begin{align} 
  X_{\Delta t}^1 [v] (x) = x - v (x) \Delta t. 
\end{align} 
The function $ X_{\Delta t}^1 [f \circ u^n] : \mathbb{T} \to \mathbb{T} $ is the first-order approximation of $ X [f \circ u] (t_{n-1}; \,\cdot\,, t_n) $ obtained by the backward Euler method. Let $ s \geq 2 $, $ \Delta t \in (0, 1) $, $ f \in W^{s, \infty} (\mathbb{T}) $ and $ D_{f,\Delta t} = \{v \in H^s (\mathbb{T}) \,|\, \snorm{1, \infty}{v} < (3 \snorm{1, \infty}{f}\Delta t)^{-1} \} $. For $ v \in D_{f,\Delta t} $, there exists a unique function $ w \in H^s (\mathbb{T}) $ such that 
\begin{align} 
  w & = v \circ {X_{\Delta t}^1} [f \circ w]. \label{sa} 
\end{align} 
We define the operator $\mathcal{S}_{f, \Delta t}^{\mathrm{A}} : D_{f,\Delta t} \to H^{s}(\mathbb{T})$ by $ \mathcal{S}_{f, \Delta t}^{\mathrm{A}}(v) = w $ with \eqref{sa}. The existence and uniqueness of $ w $ satisfying \eqref{sa} are ensured by the following lemma. 

\begin{lemma}[Lemmas 2.1 and A.1 of \cite{25Ta}] \label{lem:s} 
  Let $ s \geq 2 $ be an integer. Assume that $ v \in H^s (\mathbb{T}) $ and $ f \in W^{s, \infty} (\mathbb{T}) $. Suppose that 
  \begin{align} 
    3 \Delta t \snorm{1, \infty}{f} \snorm{1, \infty}{v} \leq 1 
  \end{align} 
  is satisfied. Then the following three statements hold: 
  \begin{enumerate}[(i)] 
    \item There exists a unique function $ w \in H^s (\mathbb{T}) $ such that \label{sa1} 
    \begin{align} 
      w = v \circ {X_{\Delta t}^1}[f \circ w]. \label{a1} 
    \end{align} 
    In the remainder of this lemma, we denote $ w $ by $ \mathcal{S}_{f, \Delta t}^{\mathrm{A}} (v) $. 
    \item The mapping $ X_{\Delta t}^1[f \circ \mathcal{S}_{f, \Delta t}^{\mathrm{A}} (v)]: \mathbb{T} \to \mathbb{T} $ is a bijection. 
    \item There exists a positive constant $ C_{\mathrm{P}}^{(1)} $ which depends polynomially only on $ \norm{s, \infty}{f} $ and $ \norm{s,2,\ast}{v} $ such that 
    \begin{align} 
      \norm{s,2,\ast}{\mathcal{S}_{f, \Delta t}^{\mathrm{A}} (v)} \leq \left(1 + C_{\mathrm{P}}^{(1)} \Delta t \right) \norm{s,2,\ast} {v}. \label{a2} 
    \end{align} 
  \end{enumerate} 
\end{lemma}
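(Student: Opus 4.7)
The plan is to construct $w$ pointwise via a scalar fixed-point argument, boot-strap regularity through repeated implicit differentiation, and finally derive the norm bound by combining a change-of-variables estimate with a Faà di Bruno expansion. Throughout, the key analytical object is the map $g := X_{\Delta t}^1[f \circ w]$, whose derivative $g'(x) = 1 - f'(w(x)) w'(x) \Delta t$ controls both bijectivity and the Jacobian appearing in the final bound.

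For parts (i) and (ii), fix $x \in \mathbb{T}$ and view \eqref{a1} as a scalar equation $y = \Phi_x(y) := v(x - f(y)\Delta t)$. Since $|\Phi_x'(y)| \le \Delta t \snorm{1,\infty}{f}\snorm{1,\infty}{v} \le 1/3$, the Banach fixed-point theorem yields a unique fixed point $y = w(x)$, and as $x$ varies this defines $w$ on $\mathbb{T}$. Implicit differentiation of the identity $w(x) = v(g(x))$ gives
\begin{align}
  w'(x) = \frac{v'(g(x))}{1 + f'(w(x)) v'(g(x)) \Delta t},
\end{align}
so $\snorm{1,\infty}{w} \le (3/2)\snorm{1,\infty}{v}$ and $g'(x) \ge 1/2$. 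Since $g$ lifts to a strictly increasing map on $\mathbb{R}$ commuting with integer translations, it is a bijection of $\mathbb{T}$, establishing (ii). Higher regularity of $w$ is obtained by induction: each $w^{(k)}$ is a rational expression in $v^{(1)}(g),\ldots,v^{(k)}(g)$, $f^{(1)}(w),\ldots,f^{(k)}(w)$ and $w^{(1)},\ldots,w^{(k-1)}$, with denominator bounded away from zero, so $v \in H^s$ and $f \in W^{s,\infty}$ yield $w \in H^s$.

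For the estimate \eqref{a2}, split $\norm{s,2,\ast}{w}^2$ into its $L^2$ and seminorm parts. The $L^2$ part follows by the change of variables $y = g(x)$: the bound $1/g'(x) \le 1 + C_{\mathrm{P}}\Delta t$, with $C_{\mathrm{P}}$ polynomial in $\snorm{1,\infty}{f}$ and $\snorm{1,\infty}{v}$, yields $\norm{0,2}{w}^2 \le (1 + C_{\mathrm{P}}\Delta t)\norm{0,2}{v}^2$. For the seminorm, apply Faà di Bruno to $\partial_x^s(v \circ g)$ and isolate the leading term $v^{(s)}(g(x))(g'(x))^s$ from the remainder, a sum of products of strictly lower-order derivatives of $v$ evaluated at $g$ with derivatives of $g$ of order $\ge 2$. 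Because $g = \mathrm{id} - (f \circ w)\Delta t$, every derivative of $g$ of order $\ge 2$ carries an explicit factor of $\Delta t$, so the remainder is bounded in $L^2$ by $C_{\mathrm{P}}\Delta t$ with $C_{\mathrm{P}}$ polynomial in $\norm{s,\infty}{f}$ and $\norm{s,2,\ast}{v}$. For the leading term, the change of variables produces $(1 + C_{\mathrm{P}}\Delta t)\snorm{s,2}{v}^2$, and assembling these bounds and taking a square root gives \eqref{a2}.

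The main obstacle is the Faà di Bruno remainder. One must express each $\partial_x^k g$ for $k \ge 2$ as $-\Delta t$ times a polynomial in $f^{(1)},\ldots,f^{(k)}$ composed with $w$ and the first $k$ derivatives of $w$, and then bound mixed products of these factors in $L^2$ using only $W^{k,\infty}$ control on $f$ and $H^s$ control on $v$. This forces Gagliardo--Nirenberg-type interpolation on $w$, whose intermediate Sobolev norms must in turn be estimated in terms of $\norm{s,2,\ast}{v}$ and $\norm{s,\infty}{f}$; verifying that the dependence on these two quantities remains polynomial throughout the induction is the delicate step that produces the constant $C_{\mathrm{P}}^{(1)}$ in the precise form asserted by the lemma.
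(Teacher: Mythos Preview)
The paper does not prove this lemma; it is quoted verbatim from the cited reference \cite{25Ta} (Lemmas~2.1 and~A.1 there), so there is no in-paper argument to compare against. Your plan is the standard one and is correct in outline: pointwise scalar contraction for existence/uniqueness, implicit differentiation to bootstrap regularity and bound $g'$ away from zero, change of variables for the $L^2$ part, and Fa\`a di Bruno plus the observation that $\partial_x^k g = -\Delta t\,\partial_x^k(f\circ w)$ for $k\ge 2$ to handle the $H^s$ seminorm.

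One point deserves tightening. You say the Fa\`a di Bruno remainder is bounded in $L^2$ by $C_{\mathrm P}\Delta t$ with $C_{\mathrm P}$ polynomial in $\norm{s,\infty}{f}$ and $\norm{s,2,\ast}{v}$. As written this does not yet yield the \emph{multiplicative} form \eqref{a2}: from $\snorm{s,2}{w}\le (1+C\Delta t)^{1/2}\snorm{s,2}{v}+C_{\mathrm P}\Delta t$ you cannot pass to $(1+C_{\mathrm P}^{(1)}\Delta t)\norm{s,2,\ast}{v}$ with $C_{\mathrm P}^{(1)}$ polynomial unless the remainder itself carries a factor of $\norm{s,2,\ast}{v}$. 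The fix is already implicit in the structure of the formula: every non-leading Fa\`a di Bruno term contains exactly one factor $v^{(j)}\!\circ g$ with $1\le j\le s-1$, and the remaining factors are products of derivatives of $g$ (hence $L^\infty$-bounded polynomially in $\norm{s,\infty}{f}$ and $\norm{s,2,\ast}{w}$, the latter controlled inductively by $\norm{s,2,\ast}{v}$). Placing the single $v^{(j)}\!\circ g$ factor in $L^2$ (after the change of variables) gives a bound of the form $C_{\mathrm P}\Delta t\,\norm{s,2,\ast}{v}$, which is what is needed. Once you make this explicit, the assembly into \eqref{a2} is immediate.
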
 

For $ \Lambda = \Lambda_4 $ or $ \Lambda_5 $, let $ \mathcal{S}_{\Lambda, \Delta t}^{\mathrm{D}}: L^2 (\mathbb{T}) \to L^2 (\mathbb{T}) $ be defined by 
\begin{align} 
  (\mathcal{S}_{\Lambda, \Delta t}^{\mathrm{D}} v) (x) = \sum_{(\gamma, \lambda) \in \Lambda} \gamma v (x + \lambda (\nu \Delta t)^{1/3}). 
\end{align} 

In the following, we reformulate the scheme \eqref{slj1}--\eqref{slj2} using the operators $ \mathcal{S}_{f, \Delta t}^{\mathrm{A}} $ and $ \mathcal{S}_{\Lambda, \Delta t}^{\mathrm{D}} $. Let $ u_h^n = \tilde{\mathcal{I}}_h [U^n] $. For $ n \geq 1 $, we have 
\begin{align} 
  (\mathcal{S}_{\Lambda, \Delta t}^{\mathrm{D}} u_h^{n-1}) (x) = \sum_{(\gamma, \lambda) \in \Lambda} \gamma u_h^{n - 1} (x + \lambda (\nu \Delta t)^{1/3}), 
\end{align} 
and 
\begin{align} 
  & \mathcal{S}_{f, \Delta t}^{\mathrm{A}} (\mathcal{S}_{\Lambda, \Delta t}^{\mathrm{D}} u_h^{n - 1}) (x) \\ 
  & = \sum_{(\gamma, \lambda) \in \Lambda} \gamma u_h^{n - 1} (x - f (\mathcal{S}_{f, \Delta t}^{\mathrm{A}} (\mathcal{S}_{\Lambda, \Delta t}^{\mathrm{D}} u_h^{n - 1}) (x)) \Delta t + \lambda (\nu \Delta t)^{1/3}). \label{sl4} 
\end{align} 
Suppose that \eqref{slj2} admits a unique solution $ u_j^n $ for each $ (j, n) \in \{1, \ldots, N_x\} \times \{1, \ldots, N_t\} $. Comparing \eqref{slj2} and \eqref{sl4}, we have 
\begin{align} 
  u_j^n = \mathcal{S}_{f, \Delta t}^{\mathrm{A}} (\mathcal{S}_{\Lambda, \Delta t}^{\mathrm{D}} u_h^{n - 1}) (x_j). 
\end{align} 
Therefore, we have 
\begin{align}
  u_h^n = \tilde{\mathcal{I}}_h \left[\Big(\mathcal{S}_{f, \Delta t}^{\mathrm{A}} (\mathcal{S}_{\Lambda, \Delta t}^{\mathrm{D}} u_h^{n - 1}) (x_j)\Big)_{j=1}^{N_x}\right]. \label{sl8} 
\end{align}
Let the interpolation operator $ \mathcal{I}_h: C (\mathbb{T}) \to C (\mathbb{T}) $ be defined by 
\begin{align} 
  \mathcal{I}_h v = \tilde{\mathcal{I}}_h [(v (x_1), v (x_2), \ldots, v (x_{N_x}))]. 
\end{align} 
Then, the fully SL scheme \eqref{slj1}--\eqref{slj2} can be rewritten as 
\begin{empheq}[left={\empheqlbrace}]{alignat=2} 
  & u_h^0 = \mathcal{I}_h u_0, \label{sl1} \\ 
  & u_h^n = \mathcal{I}_h \left(\mathcal{S}_{f, \Delta t}^{\mathrm{A}} (\mathcal{S}_{\Lambda, \Delta t}^{\mathrm{D}} u_h^{n - 1}) \right). \label{sl2} 
\end{empheq} 

By Lemma \ref{lem:s}, the function $ u_h^n $ is uniquely determined by \eqref{sl2} provided that 
\begin{align} 
  \Delta t \snorm{1,\infty}{f}\snorm{1,\infty}{\mathcal{S}_{\Lambda, \Delta t}^{\mathrm{D}} u_h^n} \leq \frac{1}{3}. \label{sl9} 
\end{align} 
As we will see in the proof of Theorem \ref{thm:s} later, if the initial value problem \eqref{ivp1}--\eqref{ivp2} admits a sufficiently smooth solution and $ \Delta t $ is sufficiently small, $ \norm{s,2}{u_h^n} $ ($ s \geq 2 $) is bounded by a constant independent of $ h $, $ \Delta t $ and $ n $. Under this condition, we obtain the bound 
\begin{align} 
  \snorm{1, \infty}{u_h^n} \leq C, 
\end{align} 
where $ C $ is independent of $ h $, $ \Delta t $ and $ n $. Therefore, the condition \eqref{sl9} is satisfied for all $ n $ if $ \Delta t $ is sufficiently small. 

We introduce two choices for the interpolation operator $ \mathcal{I}_h $ in the scheme \eqref{sl1}--\eqref{sl2}. For positive integer $ l $, let $ \mathbb{P}^l (\lparen x_{j-1}, x_j\rbrack) $ be the set of all polynomials of degree at most $ l $ on the interval $ \lparen x_{j-1}, x_j\rbrack $, and the space of piecewise polynomials $ \mathbb{P}_h^l (\mathbb{T}) $ be defined by 
\begin{align} 
  \mathbb{P}_h^l (\mathbb{T}) = \left\{v \in L^2 (\mathbb{T}) \relmiddle v|_{\lparen x_{j-1}, x_j \rbrack} \in \mathbb{P}^l (\lparen x_{j-1}, x_j \rbrack) \text{ for all } j = 1, \ldots, N_x \right\}. 
\end{align} 
For $ s \geq 2 $, the spline interpolation operator $ \mathcal{I}_{h, \mathrm{spl}}^{2s-1}: C (\mathbb{T}) \to \mathbb{P}_h^{2 s - 1} (\mathbb{T}) \cap C^{2 s - 2} (\mathbb{T}) $ is defined by 
\begin{gather} 
  (\mathcal{I}_{h, \mathrm{spl}}^{2s-1} v) (x_j) = v (x_j), \text{ for all } j \in \{1, \ldots, N_x\}. \label{sp} 
\end{gather} 
In this paper,  when we use the spline interpolation, we suppose that $ \mathcal{I}_{h, \mathrm{spl}}^{2s-1} v $ is uniquely defined by \eqref{sp} for any $ v \in C (\mathbb{T}) $. The linear and cubic spline interpolations used in \cite{25Fe} are identical to $ \mathcal{I}_{h, \mathrm{spl}}^{2s-1} $ with $ s = 1 $ and $ s = 2 $, respectively. 

Let us introduce another interpolation. We define the Hermite interpolation operator $ \mathcal{I}_{h, \mathrm{Her}}^{2s-1}: C^{s - 1} (\mathbb{T}) \to \mathbb{P}_h^{2 s - 1} (\mathbb{T}) \cap C^{s - 1} (\mathbb{T}) $ by 
\begin{gather} 
  \partial_x^k (\mathcal{I}_{h, \mathrm{Her}}^{2s-1} v) (x_j) = \partial_x^k v (x_j), \text{ for all } (j, k) \in \{1, \ldots, N_x\} \times \{0, \ldots, s - 1\}. \label{her} 
\end{gather} 
If Hermite interpolation is used in the scheme \eqref{sl1}--\eqref{sl2}, the resulting scheme cannot be represented within the framework of \eqref{slj1}--\eqref{slj2}, since we need the numerical approximation of derivatives $ \partial_x^k u (x_j, t_n) $ ($ k = 1,\ldots,s-1 $). 
As a simple example, we exhibit the scheme with the cubic Hermite interpolation operator. 
Denote the numerical approximation of $ \partial_x u (x_j, t_n) $ by $ v_j^n $, and let $ \tilde{U}^{n} = (u_1^n, \ldots, u_{N_x}^n, v_1^n, \ldots, v_{N_x}^n) $. 
Let us define the cubic interpolation operator $ \tilde{\mathcal{I}}_{h, \mathrm{Her}}^{3}: \mathbb{R}^{2 N_x} \to \mathbb{P}_h^{3} (\mathbb{T}) \cap C^{1} (\mathbb{T}) $ be defined by 
\begin{align}
  \tilde{\mathcal{I}}_{h, \mathrm{Her}}^{3} [A] (x_j) = a_j, \quad 
  \partial_x \left(\tilde{\mathcal{I}}_{h, \mathrm{Her}}^{3} [A]\right) (x_j) = b_j
\end{align} 
for $ A = (a_1, \ldots, a_{N_x}, b_1, \ldots, b_{N_x}) $. Then, the scheme can be written as 
\begin{empheq}[left={\empheqlbrace}]{alignat=2} 
  u_j^0 & = u_0 (x_j), \quad v_j^0 = \partial_x u_0 (x_j), \\ 
  u_j^n & = \sum_{(\gamma, \lambda) \in \Lambda} \gamma \tilde{\mathcal{I}}_{h, \mathrm{Her}}^{3} [\tilde{U}^{n - 1}] (x_j - f (u_j^n) + \lambda (\nu \Delta t)^{1/3}), \label{slj1h} \\
  v_j^n & = \frac{w_j^n}{1 + w_j^n f^\prime (u_j^n) \Delta t}, \label{slj2h} 
\end{empheq} 
where 
\begin{align} 
  w_j^{n-1} = \sum_{(\gamma, \lambda) \in \Lambda} \gamma \partial_x (\tilde{\mathcal{I}}_{h, \mathrm{Her}}^{3} [\tilde{U}^{n - 1}]) (x_j - f (u_j^n) + \lambda (\nu \Delta t)^{1/3} ). 
\end{align} 
The numerical experiments in Section \ref{sec:test} will be performed using this scheme. 

\section{Main results}\label{sec:main} 

We prepare some assumptions for the interpolation operators. 
\begin{enumerate}[(P1)] 
  \item The operator $ \mathcal{I}_{h} $ satisfies \label{p:o} 
  \begin{align} 
    \int_\mathbb{T} \partial_x^s (v (x) - \mathcal{I}_{h} v (x)) \cdot  \partial_x^s \left(\mathcal{I}_{h} w\right) (x) \, \mathrm{d}x = 0. \label{po1} 
  \end{align} 
  for any $ v, w \in H^s (\mathbb{T}) $. Equality \eqref{po1} is equivalent to 
  \begin{align} 
    \snorm{s, 2}{(I - \mathcal{I}_{h}) v - \mathcal{I}_{h} w}^2 & = \snorm{s, 2}{(I - \mathcal{I}_{h}) v}^2 + \snorm{s, 2}{\mathcal{I}_{h} w}^2, \label{po2} 
  \end{align}
  where we denote the identity operator by $ I $. 
  \item There exists a positive constant $ C_s $ such that for any $ v \in H^s (\mathbb{T}) $, \label{p:s} 
  \begin{align} 
    \norm{0, 2}{(I - \mathcal{I}_{h}) v} & \leq C_s h^s \snorm{s, 2}{v}. \label{ps} 
  \end{align} 
  \item For an integer $ q > s $, there exists a positive constant $ C_q $ such that for any $ v \in H^q (\mathbb{T}) $, \label{p:q} 
  \begin{align} 
    \snorm{s, 2}{(I - \mathcal{I}_{h}) v} & \leq C_q h^{q - s} \snorm{q, 2}{v}. \label{pq} 
  \end{align} 
\end{enumerate} 

It is shown in Theorems 4, 7 and 9 in~\cite{SV67} that the spline interpolation $ \mathcal{I}_{h, \mathrm{spl}}^{2s-1} $ and the Hermite interpolation $ \mathcal{I}_{h, \mathrm{Her}}^{2s-1} $ satisfy (P\ref{p:o}), (P\ref{p:s}) and (P\ref{p:q}) with $ q = 2 s $. 

For $ n \in \{1, \dots, N_t\} $, we define the consistency error $ \tau_{\Delta t}^n $ of the semi-discrete scheme \eqref{sd} by 
\begin{align} 
  \tau_{\Delta t}^n & = \frac{1}{\Delta t} \Big( u^n - (\mathcal{S}_{\Lambda, \Delta t}^{\mathrm{D}} u^{n - 1}) \circ {X_{\Delta t}^1} [f \circ u^n]\Big). \label{tau} 
\end{align} 
Here, we also introduce assumptions for the parameter set $ \Lambda $. 
\begin{enumerate}[(D1)] 
  \item \label{ad1} The operator $ \mathcal{S}_{\Lambda, \Delta t}^{\mathrm{D}} $ satisfies 
  \begin{align} 
    \norm{0, 2}{\mathcal{S}_{\Lambda, \Delta t}^{\mathrm{D}} v} & \leq \norm{0, 2}{v}, \quad v \in L^2 (\mathbb{T}), \\ 
    \norm{s, 2, \ast}{\mathcal{S}_{\Lambda, \Delta t}^{\mathrm{D}} v} & \leq \norm{s, 2, \ast}{v}, \quad v \in H^s (\mathbb{T}), \\ 
    \norm{s, 2, \Delta}{\mathcal{S}_{\Lambda, \Delta t}^{\mathrm{D}} v} & \leq \norm{s, 2, \Delta}{v}, \quad v \in H^s (\mathbb{T}). 
  \end{align} 
  \item \label{ad2} There exists a positive constant $ C $ such that 
  \begin{align} 
    \norm{s + 1, \infty}{\mathcal{S}_{\Lambda, \Delta t}^{\mathrm{D}} v} \leq C \norm{s + 1, \infty}{v} \label{ad2i} 
  \end{align} 
  for any $ v \in W^{s + 1, \infty} (\mathbb{T}) $. 
  \item \label{ad3} There exist constants $ r \in \lparen 0,1 \rbrack $, independent of $ u $, and $ K > 0 $ which may depend on $ u $, such that 
  \begin{align} 
    \norm{s,2,\ast}{\tau_{\Delta t}^n} \leq K (u) \Delta t^r \label{ad3i} 
  \end{align} 
  for all $ n \in \{1, \ldots, N_t\} $, where $ u $ is the exact solution to \eqref{ivp1}--\eqref{ivp2}, and $ \tau_{\Delta t}^n $ is defined by \eqref{tau}. 
\end{enumerate} 

We will see that $ \Lambda_4 $ and $ \Lambda_5 $ satisfy (D\ref{ad1}) in Lemma \ref{lem:sd}. If $\Lambda$ is a finite set, then it is obvious that $\Lambda$ satisfies (D\ref{ad2}). In this paper, we do not consider the case where $\Lambda$ is infinite. The sets $ \Lambda_4 $ and $ \Lambda_5 $ satisfy (D\ref{ad3}) for $ r = 1 / 3 $ and $ r = 2 / 3 $, respectively, as we see later in \eqref{g4s} and \eqref{g5s}. 

In the following two theorem, we state the main results of this paper. 

\begin{theorem} \label{thm:s} 
  Assume the following hypotheses: 
  \begin{enumerate}[(i)] 
    \item \label{h1} $ f \in W^{s + 1, \infty} (\mathbb{R}) $. 
    \item \label{h2} The exact solution $ u $ to \eqref{ivp1}--\eqref{ivp2} belongs to $ W^{1,\infty}(H^{s + 2}) \cap L^{\infty} (H^{s + 4}) \cap L^{\infty}(H^{q}) $. 
    \item \label{h3} The interpolation operator $ \mathcal{I}_{h} $ satisfies the properties (P\ref{p:o}), (P\ref{p:s}) and (P\ref{p:q}) for positive integers $ s $ and $ q $ ($ s < q $). 
    \item \label{h4} The parameter set $ \Lambda $ satisfies (D\ref{ad1}), (D\ref{ad2}) and (D\ref{ad3}) for some $ r \in \lparen 0, 1 \rbrack $. 
  \end{enumerate} 
  Let $ C_\Delta $ and $ \varepsilon $ be positive constants. 
  Then there exist $ (h_0, \Delta t_0) \in (0, 1)^2 $ and $ C_{\mathrm{E}} > 0 $ depending on $ C_\Delta $, $ \varepsilon $, $ \norm{L^\infty (W^{s+1,\infty})}{u} $, $ \norm{L^{\infty}(H^{q})}{u} $, $ \norm{s + 1, \infty}{f} $ and $ K(u) $ such that, if $ \Delta = (h, \Delta t) \in (0, h_0) \times (0, \Delta t_0) $ satisfies 
  \begin{align} 
    h^{2s} / \Delta t \leq C_\Delta \quad \text{and} \quad h^{2(q - s)} / \Delta t^{1 + \varepsilon} \leq C_\Delta, \label{hyp:d} 
  \end{align} 
  the following estimate holds: 
  \begin{align} 
    \norm{s, 2, \ast}{u^n - u_h^n} \leq C_{\mathrm{E}} \left(\Delta t^{r} + \frac{h^{q - s}}{\Delta t^{1/2}}\right), \quad n = 0, \ldots, N_t. \label{b30} 
  \end{align} 
\end{theorem}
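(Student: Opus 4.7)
The plan is to follow the template from the author's earlier paper \cite{25Ta}: decompose the error $e^n = u^n - u_h^n$ into an interpolation error, a consistency error, and a propagation term, derive a one-step recursion in the weighted norm $\|\cdot\|_{s,2,\Delta}$, close it with discrete Gronwall together with a bootstrap on $\|u_h^n\|_{s,2,\ast}$, and finally convert back to $\|\cdot\|_{s,2,\ast}$. Writing $\tilde u^n = \mathcal{S}_{f,\Delta t}^{\mathrm{A}}(\mathcal{S}_{\Lambda,\Delta t}^{\mathrm{D}} u^{n-1})$ and using that $\mathcal{I}_h u_h^n = u_h^n$, the natural splitting is
\begin{align}
  e^n = (I - \mathcal{I}_h) u^n + \mathcal{I}_h (u^n - \tilde u^n) + \mathcal{I}_h \bigl( \mathcal{S}_{f,\Delta t}^{\mathrm{A}} ( \mathcal{S}_{\Lambda,\Delta t}^{\mathrm{D}} u^{n-1} ) - \mathcal{S}_{f,\Delta t}^{\mathrm{A}} ( \mathcal{S}_{\Lambda,\Delta t}^{\mathrm{D}} u_h^{n-1} ) \bigr),
\end{align}
which cleanly isolates the three error sources.

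Before estimating these, I would establish the key preliminary lemma: that $\mathcal{I}_h$ is stable in the weighted norm, $\|\mathcal{I}_h v\|_{s,2,\Delta} \le (1 + C\Delta t)^{1/2} \|v\|_{s,2,\Delta}$. The seminorm part $|\mathcal{I}_h v|_{s,2} \le |v|_{s,2}$ comes for free from the Pythagorean identity \eqref{po2} in (P\ref{p:o}). For the $L^2$ part, triangle inequality and (P\ref{p:s}) give $\|\mathcal{I}_h v\|_{0,2} \le \|v\|_{0,2} + C_s h^s |v|_{s,2}$, and Young's inequality with parameter $\Delta t$ together with the weight $h^{2s}/\Delta t$ in \eqref{nsd} absorb the cross-term with only a $(1 + C\Delta t)$ penalty. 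This is precisely the reason for the specific choice of weight, and it is what makes the $\sqrt{\Delta t}$ improvement in the final estimate possible after $N_t = T/\Delta t$ iterations.

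Next I would bound each term in the decomposition in $\|\cdot\|_{s,2,\Delta}$. The first (pure interpolation) term is controlled by (P\ref{p:s}) and (P\ref{p:q}) together with the regularity of $u$ from hypothesis \eqref{h2}, yielding a per-step contribution which, when summed through the recursion, accumulates to at most $O(h^{q-s}/\sqrt{\Delta t})$. For the consistency term, subtracting the defining identity of $\tilde u^n$ from $u^n = (\mathcal{S}_{\Lambda,\Delta t}^{\mathrm{D}} u^{n-1}) \circ X_{\Delta t}^1[f \circ u^n] + \Delta t \tau_{\Delta t}^n$ and running the contraction argument underlying Lemma \ref{lem:s} yields $\|u^n - \tilde u^n\|_{s,2,\ast} \le C \Delta t \|\tau_{\Delta t}^n\|_{s,2,\ast} \le C K(u) \Delta t^{1+r}$ via (D\ref{ad3}); the weighted-norm analogue follows from \eqref{hyp:d}. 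The propagation term combines the weighted-norm stability of $\mathcal{I}_h$ with (D\ref{ad1}) and with a Lipschitz estimate for $\mathcal{S}_{f,\Delta t}^{\mathrm{A}}$ — derived by the fixed-point argument behind Lemma \ref{lem:s}, carried out this time in $\|\cdot\|_{s,2,\Delta}$ — to yield a factor $(1 + C\Delta t)\|e^{n-1}\|_{s,2,\Delta}$. Combining the three bounds and applying discrete Gronwall produces $\|e^n\|_{s,2,\Delta} \le C(\Delta t^r + h^{q-s}/\sqrt{\Delta t})$, and \eqref{hyp:d} converts this into the stated bound in $\|\cdot\|_{s,2,\ast}$.

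The main technical difficulty is that every Lipschitz or stability estimate above, together with the very well-posedness of the next iterate $\mathcal{S}_{f,\Delta t}^{\mathrm{A}}(\mathcal{S}_{\Lambda,\Delta t}^{\mathrm{D}} u_h^{n-1})$, requires an a priori control of $\|u_h^{n-1}\|_{s,2,\ast}$ — and hence, via the Sobolev embedding $H^s \hookrightarrow W^{1,\infty}$ valid for $s \ge 2$, of $\snorm{1,\infty}{u_h^{n-1}}$ — uniformly in $n$ and in the discretization parameters. This forces a bootstrap argument: postulate a uniform bound $\|u_h^k\|_{s,2,\ast} \le M$ for $k \le n-1$, derive the error inequality at step $n$, and use the bound on $\|u^n\|_{s,2,\ast}$ together with the error bound to upgrade the hypothesis at step $n$, all while ensuring that $\Delta t$ remains small enough that $3 \Delta t \snorm{1,\infty}{f} \snorm{1,\infty}{\mathcal{S}_{\Lambda,\Delta t}^{\mathrm{D}} u_h^{n-1}} \le 1$ as required by Lemma \ref{lem:s}. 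Reconciling this nonlinear bootstrap with the orthogonality structure \eqref{po1} that underpins the sharp weighted-norm stability is the most delicate part of the argument.
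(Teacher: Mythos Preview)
Your overall strategy (decompose, recurse, bootstrap, Gronwall) matches the paper's, but there is a genuine gap in the final step. Under \eqref{hyp:d} one has $h^{2s}/\Delta t \le C_\Delta$, so the weight in \eqref{nsd} is bounded \emph{above}, which means $\norm{s,2,\Delta}{\,\cdot\,} \le C\,\norm{s,2,\ast}{\,\cdot\,}$ but \emph{not} conversely: $\Delta t/h^{2s}$ has no upper bound, so you cannot recover the $H^s$-seminorm from the weighted norm. Concretely, a bound $\norm{s,2,\Delta}{e^n}\le B$ only gives $\snorm{s,2}{e^n}\le (\Delta t/h^{2s})^{1/2}B$, and the prefactor blows up. Thus your plan ``derive the estimate in $\norm{s,2,\Delta}{\,\cdot\,}$ and convert via \eqref{hyp:d}'' cannot produce \eqref{b30}. (The weighted-norm route is exactly what the paper uses for Theorem~\ref{thm:2}, the $L^2$ result, where only $\norm{0,2}{\,\cdot\,}\le\norm{s,2,\Delta}{\,\cdot\,}$ is needed.)

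The paper instead runs the whole recursion directly in $\norm{s,2,\ast}{\,\cdot\,}$. The point you are missing is that the orthogonality (P\ref{p:o}) must be used at the level of the error \emph{decomposition}, not merely to prove stability of $\mathcal{I}_h$. Writing $e_h^n=\mathcal{I}_h(\eta_h^n+\theta_h^n+\Delta t\,\tau_{\Delta t}^n)+\rho_h^n$ with $\rho_h^n=(I-\mathcal{I}_h)u^n$, \eqref{po2} gives the exact Pythagorean splitting
\[
\snorm{s,2}{e_h^n}^2=\snorm{s,2}{\mathcal{I}_h(\eta_h^n+\theta_h^n+\Delta t\,\tau_{\Delta t}^n)}^2+\snorm{s,2}{\rho_h^n}^2,
\]
so the interpolation contribution $\snorm{s,2}{\rho_h^n}^2\le C h^{2(q-s)}$ enters the squared recursion \emph{without} the $1/\Delta t$ penalty that Young's inequality would otherwise impose. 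After $N_t=T/\Delta t$ steps this accumulates to $Th^{2(q-s)}/\Delta t$, i.e.\ exactly the $h^{q-s}/\sqrt{\Delta t}$ rate. If you only use orthogonality to bound $\norm{s,2,\ast}{\mathcal{I}_h v}$ and then apply a triangle/Young argument to the three-term decomposition, the $\snorm{s,2}{\rho_h^n}^2$ term would pick up a factor $1/\Delta t$ and the final rate would degrade to $h^{q-s}/\Delta t$. The bootstrap on $\norm{s,2,\ast}{u_h^{n-1}}$ and the treatment of $\eta_h^n,\theta_h^n,\tau_{\Delta t}^n$ are as you describe.
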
 

\begin{theorem} \label{thm:2} 
  Assume the hypotheses (\ref{h1})--(\ref{h4}) in Theorem \ref{thm:s}. Let $ C_\Delta $ and $ \varepsilon $ be positive constants. Then there exist $ (h_0, \Delta t_0^\prime) \in (0, 1)^2 $ and $  C_{\mathrm{E}}^\prime > 0 $, depending on $ C_\Delta $, $ \varepsilon $, $ \norm{L^\infty (W^{s+1,\infty})}{u} $, $ \norm{L^{\infty}(H^{q})}{u} $, $ \norm{s + 1, \infty}{f} $ and $ K(u) $, such that, if $ \Delta = (h, \Delta t) \in (0, h_0) \times (0, \Delta t_0) $ satisfies the condition \eqref{hyp:d}, the following estimate holds: 
  \begin{align} 
    \norm{s, 2, \Delta}{u^n - u_h^n} \leq C_{\mathrm{E}}^\prime \left(\Delta t^r + \frac{h^q}{\Delta t}\right), \quad n = 0, \ldots, N_t. \label{b26} 
  \end{align} 
  In particular, \eqref{b26} implies the $ L^2 $-error estimate 
  \begin{align} 
    \norm{0,2}{u^n - u_h^n} \leq C_{\mathrm{E}}^\prime \left(\Delta t^r + \frac{h^q}{\Delta t}\right), \quad n = 0, \ldots, N_t. \label{b26-2} 
  \end{align} 
\end{theorem}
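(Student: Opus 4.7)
The plan is to mirror the proof of Theorem~\ref{thm:s}, but to propagate the error in the weighted norm $\norm{s,2,\Delta}{\cdot}$ in place of $\norm{s,2,\ast}{\cdot}$. First I would split
\[e^n := u^n - u_h^n = \eta^n + \theta^n, \qquad \eta^n := (I-\mathcal{I}_h)u^n, \qquad \theta^n := \mathcal{I}_h u^n - u_h^n,\]
and bound $\norm{s,2,\Delta}{\eta^n}$ from the interpolation estimates: (P\ref{p:q}) yields $\snorm{s,2}{\eta^n}\leq C h^{q-s}\snorm{q,2}{u^n}$, while the $L^2$-part requires the sharpened bound $\norm{0,2}{(I-\mathcal{I}_h)v}\leq C h^{q}\snorm{q,2}{v}$, which follows from (P\ref{p:o})--(P\ref{p:q}) by an Aubin--Nitsche duality argument in the $\dot{H}^s$-inner product. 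Together they give $\norm{s,2,\Delta}{\eta^n}\leq C h^q/\sqrt{\Delta t}$, which is dominated by the target rate $h^q/\Delta t$.

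For $\theta^n$ I would rewrite the exact solution as a perturbed fixed point,
\[u^n = \mathcal{S}_{f,\Delta t}^{\mathrm{A}}\bigl(\mathcal{S}_{\Lambda,\Delta t}^{\mathrm{D}} u^{n-1} + \Delta t\,\tilde{\tau}_{\Delta t}^n\bigr), \qquad \tilde{\tau}_{\Delta t}^n := \tau_{\Delta t}^n\circ\bigl(X_{\Delta t}^1[f\circ u^n]\bigr)^{-1},\]
which is consistent with the definition \eqref{tau}, the inverse map existing by Lemma~\ref{lem:s}~(ii). Subtracting the scheme \eqref{sl2} and applying $\mathcal{I}_h$, the difference $\theta^n$ can be estimated by chaining three stability blocks: weighted-norm stability of the interpolation operator, $\norm{s,2,\Delta}{\mathcal{I}_h w}\leq(1+C\Delta t)\norm{s,2,\Delta}{w}$; a weighted Lipschitz estimate for $\mathcal{S}_{f,\Delta t}^{\mathrm{A}}$ analogous to Lemma~\ref{lem:s}~(iii); and the weighted non-expansivity of $\mathcal{S}_{\Lambda,\Delta t}^{\mathrm{D}}$ from (D\ref{ad1}). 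This produces a recursion of the form
\[\norm{s,2,\Delta}{\theta^n} \leq (1+C\Delta t)\norm{s,2,\Delta}{e^{n-1}} + C\Delta t\bigl(\Delta t^r + h^q/\Delta t\bigr),\]
the second term bundling the consistency contribution $\Delta t\norm{s,2,\ast}{\tilde{\tau}_{\Delta t}^n}$ (controlled via (D\ref{ad3})) with the already-bounded $\eta^{n-1}$.

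Discrete Gronwall over $n\leq N_t = T/\Delta t$, combined with $\norm{s,2,\Delta}{e^0}\leq C h^q/\sqrt{\Delta t}$, then yields \eqref{b26}; estimate \eqref{b26-2} is immediate since $\norm{0,2}{\,\cdot\,}\leq\norm{s,2,\Delta}{\,\cdot\,}$. \textbf{The main obstacle} will be the weighted interpolation stability above: property (P\ref{p:o}) controls only the seminorm part of $\mathcal{I}_h w$, so the $L^2$-part has to be handled by combining (P\ref{p:s}), (P\ref{p:q}) and an inverse-type estimate, with the mesh constraint $h^{2s}/\Delta t\leq C_\Delta$ from \eqref{hyp:d} calibrated precisely to absorb the prefactor $h^{2s}/\Delta t$ appearing in the definition \eqref{nsd}. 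A secondary technical point is maintaining uniform control of $\norm{s+1,\infty}{u_h^n}$ so that Lemma~\ref{lem:s} and (D\ref{ad2}) apply at every step; this can be inherited from the $H^s$-estimate of Theorem~\ref{thm:s} via Sobolev embedding, once the additional regularity $u\in L^\infty(H^q)$ is invoked to absorb the higher-order interpolation residual.
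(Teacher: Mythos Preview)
Your overall strategy --- mirror the $H^s$-proof with weighted norms and invoke the weighted stability lemmas --- matches the paper's, but the recursion you write down loses a factor $\Delta t^{1/2}$. With the split $e^n=\eta^n+\theta^n$ and the triangle inequality $\norm{s,2,\Delta}{e^n}\le\norm{s,2,\Delta}{\eta^n}+\norm{s,2,\Delta}{\theta^n}$, the interpolation residual contributes $\norm{s,2,\Delta}{\eta^n}\le Ch^q/\sqrt{\Delta t}$ at \emph{every} step; Gronwall over $N_t\sim T/\Delta t$ steps then yields $O(h^q/\Delta t^{3/2})$, not $O(h^q/\Delta t)$. The paper avoids this by working with \emph{squared} norms and using (P\ref{p:o}) not merely inside the interpolation-stability lemma but directly in the error recursion: since $\rho_h^n:=(I-\mathcal{I}_h)u^n$ is $\dot H^s$-orthogonal to the range of $\mathcal{I}_h$, one has the Pythagorean identity $\snorm{s,2}{e_h^n}^2=\snorm{s,2}{\mathcal{I}_h(\cdots)}^2+\snorm{s,2}{\rho_h^n}^2$, so the seminorm part of $\rho_h^n$ enters the $\norm{s,2,\Delta}$-recursion with coefficient $h^{2s}/\Delta t$ rather than $h^{2s}/\Delta t^2$. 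Together with the sharp bound $\norm{0,2}{\rho_h^n}\le Ch^q$ (which you correctly anticipate), this gives a per-step squared contribution $Ch^{2q}/\Delta t$ and, after summation, the claimed $h^q/\Delta t$.

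A second, smaller gap: your black-box ``weighted Lipschitz estimate for $\mathcal{S}_{f,\Delta t}^{\mathrm{A}}$'' applied to $v_1=\mathcal{S}_{\Lambda,\Delta t}^{\mathrm{D}}u^{n-1}+\Delta t\,\tilde\tau_{\Delta t}^n$ and $v_2=\mathcal{S}_{\Lambda,\Delta t}^{\mathrm{D}}u_h^{n-1}$ ultimately rests on Lemma~\ref{lem:d}, which requires one of the two inputs to lie in $W^{s+1,\infty}$; neither $v_2$ (numerical) nor $\Delta t\,\tilde\tau_{\Delta t}^n$ (controlled only in $H^s$ by (D\ref{ad3})) provides this. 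The paper's finer split $e_h^n=\mathcal{I}_h(\eta_h^n+\theta_h^n+\Delta t\,\tau_{\Delta t}^n)+\rho_h^n$ keeps $\tau_{\Delta t}^n$ as a separate additive term and arranges for Lemma~\ref{lem:d} to act only on $\mathcal{S}_{\Lambda,\Delta t}^{\mathrm{D}}u^{n-1}$, which is in $W^{s+1,\infty}$ by (D\ref{ad2}). Finally, you do not need $W^{s+1,\infty}$-control of $u_h^n$: only the bound $\norm{s,2,\ast}{u_h^n}\le 2\sup_t\norm{s,2,\ast}{u(t)}$ is required, and this is inherited verbatim from Step~2 of the proof of Theorem~\ref{thm:s}.
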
 

For instance, if we use the cubic spline or Hermite interpolation operators, the estimates \eqref{b30} and \eqref{b26-2} can be written as 
\begin{align} 
  \norm{s, 2, \ast}{u^n - u_h^n} \leq C_{\mathrm{E}} \left(\Delta t^{r} + \frac{h^2}{\Delta t^{1/2}}\right), 
\end{align} 
and 
\begin{align} 
  \norm{0,2}{u^n - u_h^n} \leq C_{\mathrm{E}}^\prime \left(\Delta t^r + \frac{h^4}{\Delta t}\right). 
\end{align} 

\section{Key lemmas and the proofs of the main theorems}\label{sec:proof} 

\subsection{Preparations} 

The following lemmas can be found in \cite{25Ta}. 

\begin{lemma}[Lemma~A.3 of \cite{25Ta}]\label{lem:c} 
  Let $ v, w \in H^s (\mathbb{T}) $ and $ g \in W^{s,\infty} (\mathbb{T}) $ for $ s \geq 2 $. 
  Suppose that $ w $ satisfies $ \snorm{1, \infty}{w} \leq 1 / (2 \Delta t \snorm{1, \infty}{g}) $. 
  Then there exists a positive constant $ C_{\mathrm{P}}^{(3)} $ which depends polynomially on $ \norm{s, \infty}{g} $ and $ \norm{s, 2, \ast}{w} $ such that 
  \begin{align} 
    \norm{s, 2, \ast}{v \circ {X_{\Delta t}^1}[g \circ w]} \leq (1 + C_{\mathrm{P}}^{(3)} \Delta t) \norm{s, 2, \ast}{v}. \label{c2} 
  \end{align} 
  Furthermore, suppose there exists a positive constant $ C_\Delta $ such that $ h^{2s} / \Delta t \leq C_\Delta $. Then there exists a positive constant $ C_{\mathrm{P}}^{(4)} $ which depends polynomially on $ \norm{s, \infty}{g} $ and $ \norm{s, 2, \ast}{w} $ such that 
  \begin{align} 
    \norm{s, 2, \Delta}{v \circ {X_{\Delta t}^1}[g \circ w]} \leq (1 + C_{\mathrm{P}}^{(4)} \Delta t) \norm{s, 2, \Delta}{v}. \label{c2-2} 
  \end{align} 
\end{lemma}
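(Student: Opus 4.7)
Write $\Phi(x) = X_{\Delta t}^1[g\circ w](x) = x - \Delta t\,g(w(x))$. The hypothesis $\snorm{1,\infty}{w}\leq 1/(2\Delta t\snorm{1,\infty}{g})$ forces $|\Phi'(x) - 1|\leq 1/2$, so $\Phi$ is a $C^{s-1}$-diffeomorphism of $\mathbb{T}$ with $\Phi'(x)\in[1/2,3/2]$ and $|(\Phi^{-1})'(y) - 1|\leq C_P\Delta t$, where $C_P$ depends polynomially on $\norm{s,\infty}{g}$ and, via the Sobolev embedding (since $s\geq 2$), on $\norm{s,2,\ast}{w}$. The strategy is to derive both \eqref{c2} and \eqref{c2-2} from a single Fa\`a di Bruno expansion, treating the $L^2$ piece by a direct change of variables and the top-order seminorm by Bell-polynomial bookkeeping.

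For the $L^2$ piece, the change of variables $y=\Phi(x)$ yields $\norm{0,2}{v\circ\Phi}^2 = \int_{\mathbb{T}}|v(y)|^2\,|(\Phi^{-1})'(y)|\,\mathrm{d}y \leq (1+C_P\Delta t)\norm{0,2}{v}^2$. The main step is the Fa\`a di Bruno expansion
\[
\partial_x^s(v\circ\Phi) = ((\partial_x^s v)\circ\Phi)\,(\Phi')^s + \mathcal{R},\qquad \mathcal{R}=\sum_{k=1}^{s-1}((\partial_x^k v)\circ\Phi)\,P_k(\Phi',\Phi'',\ldots,\Phi^{(s-k+1)}).
\]
Since $\Phi^{(j)} = -\Delta t\,(g\circ w)^{(j)}$ for $j\geq 2$, and since every Bell monomial appearing in $P_k$ must contain at least one factor $\Phi^{(j)}$ with $j\geq 2$ (otherwise its derivative indices sum to $k<s$), each monomial in $\mathcal{R}$ carries an explicit $\Delta t$ factor while its $L^\infty$-coefficient is polynomial in $\norm{s,\infty}{g}$ and $\norm{s,2,\ast}{w}$. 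A change of variables in the leading term gives $\norm{0,2}{((\partial_x^s v)\circ\Phi)(\Phi')^s}^2 \leq (1+C_P\Delta t)\snorm{s,2}{v}^2$.

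For \eqref{c2}, I would bound $\norm{0,2}{\mathcal{R}}\leq C_P\Delta t\sum_{k=1}^{s-1}\norm{0,2}{\partial_x^k v}$ and use Gagliardo--Nirenberg interpolation $\norm{0,2}{\partial_x^k v}\leq C\norm{s,2,\ast}{v}$ for $0\leq k\leq s-1$. Combining with the leading term via the elementary $\|a+b\|^2\leq(1+\Delta t)\|a\|^2+(1+\Delta t^{-1})\|b\|^2$ gives $\snorm{s,2}{v\circ\Phi}^2\leq(1+C_P\Delta t)\norm{s,2,\ast}{v}^2$, and adding the $L^2$ bound yields \eqref{c2}. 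For the weighted estimate \eqref{c2-2}, the leading contribution $(h^{2s}/\Delta t)\cdot(1+C_P\Delta t)\snorm{s,2}{v}^2$ is absorbed into $(1+C_P\Delta t)\norm{s,2,\Delta}{v}^2$. Each remainder term contributes at most $(h^{2s}/\Delta t)\Delta t^2\norm{0,2}{\partial_x^k v}^2=\Delta t\cdot h^{2s}\norm{0,2}{\partial_x^k v}^2$; Gagliardo--Nirenberg combined with $h^{2s}/\Delta t\leq C_\Delta$ then yields
\[
h^{2s}\norm{0,2}{\partial_x^k v}^2\leq C\norm{0,2}{v}^{2(1-k/s)}(h^{2s})^{1-k/s}(h^{2s}\snorm{s,2}{v}^2)^{k/s}\leq C\,C_\Delta^{1-k/s}\Delta t\,\norm{s,2,\Delta}{v}^2,
\]
so the remainder is $O(\Delta t^2)\norm{s,2,\Delta}{v}^2$, closing \eqref{c2-2}.

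The main obstacle is the combinatorial bookkeeping in Fa\`a di Bruno: one must verify that every remainder monomial genuinely carries a factor $\Phi^{(j)}$ with $j\geq 2$ (to extract the crucial $\Delta t$) and that the resulting coefficients depend polynomially only on $\norm{s,\infty}{g}$ and $\norm{s,2,\ast}{w}$ rather than on higher-order norms of $v$. Once this is in place, the remaining arguments are routine change-of-variables and Gagliardo--Nirenberg interpolation; the hypothesis $h^{2s}/\Delta t\leq C_\Delta$ intervenes only in the weighted case to convert the excess factor $(h^{2s})^{1-k/s}$ into a power of $\Delta t$.
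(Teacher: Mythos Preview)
The paper does not prove this lemma in-text; it is quoted from \cite{25Ta}, so there is no in-paper argument to compare against. Your Fa\`a di Bruno strategy is the natural one and is essentially correct, but one step in the remainder analysis needs adjustment.

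You assert that for every $1\le k\le s-1$ the Bell polynomial $P_k(\Phi',\ldots,\Phi^{(s-k+1)})$ lies in $L^\infty$ with norm polynomial in $\norm{s,\infty}{g}$ and $\norm{s,2,\ast}{w}$, and from this deduce $\norm{0,2}{\mathcal{R}}\le C_P\Delta t\sum_k\norm{0,2}{\partial_x^k v}$. For $k\ge 2$ this is fine: the highest derivative present is $\Phi^{(s-k+1)}$ with $s-k+1\le s-1$, and $(g\circ w)^{(j)}\in L^\infty$ for $j\le s-1$ because $H^s(\mathbb{T})\hookrightarrow W^{s-1,\infty}(\mathbb{T})$. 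For $k=1$, however, $P_1=\Phi^{(s)}=-\Delta t\,(g\circ w)^{(s)}$, and expanding $(g\circ w)^{(s)}$ produces the term $g'(w)\,w^{(s)}$ with $w^{(s)}$ only in $L^2$. Hence $P_1\notin L^\infty$ in general, and the claimed inequality for $\norm{0,2}{\mathcal R}$ is not justified at $k=1$.

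The repair is immediate: for the $k=1$ contribution swap the roles, placing $(\partial_x v)\circ\Phi$ into $L^\infty$ (legitimate since $s\ge 2$ gives $\partial_x v\in H^{s-1}(\mathbb{T})\hookrightarrow L^\infty(\mathbb{T})$) and $\Phi^{(s)}$ into $L^2$. This yields
\[
\norm{0,2}{((\partial_x v)\circ\Phi)\,\Phi^{(s)}}\le C\,\snorm{1,\infty}{v}\,\Delta t\,C_P\bigl(\norm{s,\infty}{g},\norm{s,2,\ast}{w}\bigr),
\]
and $\snorm{1,\infty}{v}\le C\norm{s,2,\ast}{v}$ closes \eqref{c2} exactly as you planned. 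For \eqref{c2-2} the same swap leaves a contribution of size $C_P\,h^{2s}\snorm{1,\infty}{v}^2$; the one-dimensional Gagliardo--Nirenberg inequality $\snorm{1,\infty}{v}\le C\norm{0,2}{v}^{1-3/(2s)}\snorm{s,2}{v}^{3/(2s)}$ together with $h^{2s}\le C_\Delta\Delta t$ converts this into $C_P\Delta t\,\norm{s,2,\Delta}{v}^2$, by the same mechanism you already use for the $2\le k\le s-1$ terms. With this correction your outline goes through.
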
 

\begin{lemma}[Lemma~A.4 of \cite{25Ta}] \label{lem:d} 
  Let $ w_1 $, $ w_2 \in H^s (\mathbb{T}) $, $ g $, $ v \in W^{s + 1, \infty} (\mathbb{T}) $ for an integer $ s \geq 2 $. Then there exists a positive constant $ C_{\mathrm{P}}^{(5)} $ which depends polynomially on $ \norm{s + 1, \infty}{g} $, $ \norm{s, 2, \ast}{w_1} $ and $ \norm{s, 2, \ast}{w_2} $ such that 
  \begin{align}
    & \norm{s, 2, \ast}{v \circ {X_{\Delta t}^1} [g \circ w_1] - v \circ {X_{\Delta t}^1} [g \circ w_2]} \\ & \leq C_{\mathrm{P}}^{(5)} \Delta t \norm{s + 1, \infty}{v} \norm{s, 2, \ast}{w_1 - w_2}. \label{d4} 
  \end{align} 

  Furthermore, suppose that there exists a positive constant $ C_\Delta $ such that $ h^{2 s} / \Delta t \leq C_\Delta $. Then there exists a positive constant $ C_{\mathrm{P}}^{(6)} $ which depends polynomially on $ \norm{s + 1, \infty}{g} $, $ \norm{s, 2, \ast}{w_1} $ and $ \norm{s, 2, \ast}{w_2} $ such that 
  \begin{align} 
    & \norm{s, 2, \Delta}{v \circ {X_{\Delta t}^1} [g \circ w_1] - v \circ {X_{\Delta t}^1} [g \circ w_2]} \\ & \leq C_{\mathrm{P}}^{(6)} \Delta t \norm{s + 1, \infty}{v} \norm{s, 2, \Delta}{w_1 - w_2}. \label{d22} 
  \end{align} 
\end{lemma}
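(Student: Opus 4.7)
The plan is to extract a factor of $\Delta t$ from $v \circ \phi_1 - v \circ \phi_2$ explicitly and then control the residue by a Leibniz/Fa\`a di Bruno bookkeeping. Set $\phi_i (x) = x - \Delta t\, g (w_i (x))$ for $i = 1, 2$ and interpolate by $\phi_\theta (x) = (1 - \theta) \phi_2 (x) + \theta \phi_1 (x)$. The fundamental theorem of calculus in $\theta$ gives
\begin{align}
  v \circ \phi_1 - v \circ \phi_2 = - \Delta t \cdot G \cdot V,
\end{align}
where $G (x) = g (w_1 (x)) - g (w_2 (x))$ and $V (x) = \int_0^1 v' (\phi_\theta (x))\,\mathrm{d}\theta$. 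Writing further $G = (w_1 - w_2) \tilde g$ with $\tilde g (x) = \int_0^1 g' ((1 - \tau) w_2 + \tau w_1)\,\mathrm{d}\tau$ bounded in $L^\infty$ by $\norm{0, \infty}{g'}$ exposes the linear dependence on $w_1 - w_2$, and immediately gives the $L^2$-parts of both \eqref{d4} and \eqref{d22} via $\norm{0, 2}{G V} \leq \norm{0, \infty}{V} \norm{0, \infty}{\tilde g} \norm{0, 2}{w_1 - w_2}$, together with $\norm{0, \infty}{V} \leq \norm{0, \infty}{v'}$.

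For the $H^s$-seminorm of $G V$, I would apply Leibniz's rule
\begin{align}
  \partial_x^s (G V) = \sum_{j = 0}^{s} \binom{s}{j} \partial_x^j G \cdot \partial_x^{s - j} V,
\end{align}
and expand each $\partial_x^j G$ and $\partial_x^{s - j} V$ by Fa\`a di Bruno applied to $g \circ w_i$ and $v' \circ \phi_\theta$. The key structural observation is that in every resulting product the highest-order derivative (of $w_1$, $w_2$, $w_1 - w_2$, or of $v$ composed with $\phi_\theta$) appears at most once. I place this unique top-order factor in $L^2$ and bound all remaining factors in $L^\infty$ via the 1D Sobolev embedding $H^s (\mathbb{T}) \hookrightarrow W^{s - 1, \infty} (\mathbb{T})$ (valid since $s \geq 2$), together with the explicit form $\phi_\theta^{(m)} = \delta_{m, 1} - \Delta t [(1 - \theta) (g \circ w_2)^{(m)} + \theta (g \circ w_1)^{(m)}]$, which supplies an extra $\Delta t$-factor whenever $m \geq 2$. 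Summing over terms and multiplying by the outer $\Delta t$ yields \eqref{d4} with constant depending polynomially on $\norm{s + 1, \infty}{g}$, $\norm{s, 2, \ast}{w_1}$, and $\norm{s, 2, \ast}{w_2}$.

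For the weighted estimate \eqref{d22} the same decomposition is reused, but each seminorm contribution is now multiplied by $h^s / \sqrt{\Delta t}$. The clean case is when the top derivative falls on $w_1 - w_2$: then $(h^s / \sqrt{\Delta t}) \snorm{s, 2}{w_1 - w_2} \leq \norm{s, 2, \Delta}{w_1 - w_2}$ directly, and the other $L^\infty$-factors (derivatives of $w_i$ of order $\leq s - 1$) are controlled by $\norm{s, 2, \ast}{w_i}$. The main obstacle is the complementary case, in which the top $s$-th derivative falls on $w_i$ while $w_1 - w_2$ enters only through lower-order factors: one must route the weight $h^s / \sqrt{\Delta t}$ so as to convert $(h^s / \sqrt{\Delta t}) \snorm{s, 2}{w_i}$ into $\norm{s, 2, \Delta}{w_i}$ (bounded by $\norm{s, 2, \ast}{w_i}$ under $h^{2 s} / \Delta t \leq C_\Delta$), and then use the extra $\Delta t$-factors supplied by $\phi_\theta^{(m)}$ ($m \geq 2$) together with a Gagliardo--Nirenberg interpolation on $\mathbb{T}$ to trade the $L^\infty$-norm of $w_1 - w_2$ for $\norm{s, 2, \Delta}{w_1 - w_2}$; the delicate point is that a naive $L^\infty$-by-weighted-norm inequality fails in the regime $h^{2 s} \ll \Delta t$, so the bookkeeping must preserve the extra powers of $\Delta t$ until the very end.
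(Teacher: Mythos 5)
The paper itself gives no proof of this lemma---it is imported verbatim as Lemma~A.4 of \cite{25Ta}---so I can only assess your argument on its own terms. Your overall architecture (fundamental theorem of calculus in $\theta$ to extract the factor $-\Delta t\,G V$, the factorization $G=(w_1-w_2)\tilde g$, and Leibniz/Fa\`a di Bruno with the ``top derivative appears at most once'' bookkeeping plus $H^s(\mathbb{T})\hookrightarrow W^{s-1,\infty}(\mathbb{T})$) is sound, and it does deliver \eqref{d4} with the stated polynomial dependence.

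The gap is in your treatment of the weighted estimate \eqref{d22}, specifically in how you propose to route the single available factor $h^s/\sqrt{\Delta t}$ in the ``complementary case.'' Consider the term $(w_1-w_2)\cdot\partial_x^s\tilde g\cdot V$ arising from $\partial_x^s(GV)$: here $\partial_x^s\tilde g$ contains $\partial_x^s w_i$ and carries \emph{no} factor of $\Delta t$ (the extra $\Delta t$ from $\phi_\theta^{(m)}$, $m\ge 2$, is only available when the top derivative falls inside $V$, not inside $\tilde g$). Your plan spends the weight on converting $(h^s/\sqrt{\Delta t})\snorm{s,2}{w_i}$ into $\norm{s,2,\Delta}{w_i}$ and then hopes to dominate the unweighted $\norm{0,\infty}{w_1-w_2}$ by $\norm{s,2,\Delta}{w_1-w_2}$ using extra powers of $\Delta t$; for this term there are none, and, as you yourself note, $\norm{0,\infty}{\cdot}\lesssim\norm{s,2,\Delta}{\cdot}$ fails when $h^{2s}\ll\Delta t$. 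The fix is to route the weight the other way: $\snorm{s,2}{w_i}\le\norm{s,2,\ast}{w_i}$ needs no weight at all, since $\norm{s,2,\ast}{w_i}$ is an admissible constant in $C_{\mathrm{P}}^{(6)}$, and the entire factor $h^s/\sqrt{\Delta t}$ should be attached to $w_1-w_2$. Then Agmon/Gagliardo--Nirenberg on $\mathbb{T}$, $\norm{0,\infty}{u}\le C\bigl(\norm{0,2}{u}+\norm{0,2}{u}^{1-1/(2s)}\snorm{s,2}{u}^{1/(2s)}\bigr)$, gives with $A=h^s/\sqrt{\Delta t}\le\sqrt{C_\Delta}$ the bound
\begin{align}
  A\,\norm{0,\infty}{u}\le C\left(A\norm{0,2}{u}+\bigl(A\norm{0,2}{u}\bigr)^{1-1/(2s)}\bigl(A\snorm{s,2}{u}\bigr)^{1/(2s)}\right)\le C\bigl(1+\sqrt{C_\Delta}\bigr)\norm{s,2,\Delta}{u},
\end{align}
and the analogous interpolated bounds $A\norm{0,2}{\partial_x^i u}\le C(C_\Delta)\norm{s,2,\Delta}{u}$ for $1\le i\le s-1$ handle the intermediate derivatives of $w_1-w_2$. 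With this rerouting every term closes and no ``extra powers of $\Delta t$'' need to be preserved; as written, however, your allocation of the weight would break down on the $\partial_x^s\tilde g$ terms.
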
 

A similar inequality to \eqref{d4} appears in Lemma 4.7 of \cite{16NoTa}, where the $L^2$-norm is considered instead of the $H^s$-norm. 

The statement in the next lemma can be found in Lemmas 3.1 and 3.3 of \cite{25Ta}. Inequality \eqref{lem2} was already derived in Lemmas 3.6 and 4.4 of \cite{24KaTa} for the cubic spline and Hermite interpolation operators. 

\begin{lemma} 
  Suppose that the interpolation operator $ \mathcal{I}_{h} $ possesses the properties (P\ref{p:o}) and (P\ref{p:s}). Then there exists a positive constant $ C $ such that for any $ v \in H^s (\mathbb{T}) $ 
  \begin{align} 
    \norm{s, 2, \Delta}{\mathcal{I}_{h} v} & \leq \left(1 + C \Delta t \right) \norm{s, 2, \Delta}{v}. \label{lem2} 
  \end{align} 
  Furthermore, suppose that there exists a positive constant $ C_\Delta $ such that $ h^{2s} / \Delta t \leq C_\Delta $. Then there exists a positive constant $ C $ such that for any $ v \in H^s (\mathbb{T}) $ 
  \begin{align} 
    \norm{s, 2, *}{\mathcal{I}_{h} v} & \leq \left(1 + C \Delta t \right) \norm{s, 2, *}{v}. \label{lemw} 
  \end{align} 
\end{lemma}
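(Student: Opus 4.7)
The plan is to prove the weighted estimate \eqref{lem2} first, then derive \eqref{lemw} under the additional assumption $h^{2s}/\Delta t \leq C_\Delta$. The decisive ingredient is the orthogonality property (P\ref{p:o}): substituting $w = v$ into \eqref{po1} produces the Pythagorean identity
\begin{align}
\snorm{s,2}{v}^2 = \snorm{s,2}{\mathcal{I}_h v}^2 + \snorm{s,2}{(I - \mathcal{I}_h) v}^2,
\end{align}
which immediately gives $\snorm{s,2}{\mathcal{I}_h v} \leq \snorm{s,2}{v}$ and, more importantly, furnishes a surplus term $- \frac{h^{2s}}{\Delta t}\snorm{s,2}{(I - \mathcal{I}_h) v}^2$ that can be exploited to absorb error contributions coming from the $L^2$ component of $\norm{s,2,\Delta}{\mathcal{I}_h v}^2$.

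For the $L^2$ part, the natural route is to write $\mathcal{I}_h v = v - (I - \mathcal{I}_h) v$ and expand
\begin{align}
\norm{0,2}{\mathcal{I}_h v}^2 = \norm{0,2}{v}^2 - 2 \langle v, (I - \mathcal{I}_h) v \rangle_{0,2} + \norm{0,2}{(I - \mathcal{I}_h) v}^2.
\end{align}
The last term is bounded via (P\ref{p:s}) by $C_s^2 h^{2s} \snorm{s,2}{v}^2 = C_s^2 \Delta t \cdot \frac{h^{2s}}{\Delta t}\snorm{s,2}{v}^2$, which already contributes $O(\Delta t)\norm{s,2,\Delta}{v}^2$. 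The cross term is handled by Cauchy--Schwarz, (P\ref{p:s}), and Young's inequality with weight proportional to $\Delta t$: the splitting $2 C_s h^s \norm{0,2}{v}\snorm{s,2}{v} \leq \Delta t \norm{0,2}{v}^2 + C \frac{h^{2s}}{\Delta t}\snorm{s,2}{v}^2$ puts the first piece safely into $\Delta t\norm{s,2,\Delta}{v}^2$, while the second piece, which is a priori only an $O(1)$-multiple of a component of $\norm{s,2,\Delta}{v}^2$, is tamed by coupling it with the Pythagorean surplus and rearranging the resulting inequality before further simplification. Summing the contributions and using $\sqrt{1 + C\Delta t} \leq 1 + (C/2)\Delta t$ delivers \eqref{lem2}.

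For \eqref{lemw}, the same chain of estimates is rerun in the unweighted $H^s$-norm. Under the constraint $h^{2s}/\Delta t \leq C_\Delta$, the factor $h^s \leq \sqrt{C_\Delta \Delta t}$ makes the $L^2$ approximation error naturally $O(\sqrt{\Delta t})$-small relative to $\norm{s,2,\ast}{v}$, and the same Young balancing together with absorption by the Pythagorean surplus refines this to the desired $O(\Delta t)$ correction.

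The main obstacle is the control of the cross term in the $L^2$ expansion: a direct application of Cauchy--Schwarz and Young's inequality alone yields only a constant multiplicative factor, not $(1 + C\Delta t)$, because the approximation estimate (P\ref{p:s}) loses a full factor of $h^s$ and $h^{-s}$ cannot be compensated by the weight $\Delta t$ alone. The coupling between the Pythagorean surplus furnished by (P\ref{p:o}) and the Young balancing of the cross term is the crux of the argument, and it is precisely this coupling that explains why the weight $h^{2s}/\Delta t$ is the right choice in the definition of $\norm{s,2,\Delta}{\,\cdot\,}$.
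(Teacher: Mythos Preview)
Your overall architecture is right: the Pythagorean identity from (P\ref{p:o}) is the key, and the negative ``surplus'' term $-\tfrac{h^{2s}}{\Delta t}\snorm{s,2}{(I-\mathcal{I}_h)v}^2$ is precisely what makes the weighted norm work. However, the cross-term estimate as you wrote it does not close. You bound
\[
\bigl|2\langle v,(I-\mathcal{I}_h)v\rangle\bigr|
\le 2\norm{0,2}{v}\,\norm{0,2}{(I-\mathcal{I}_h)v}
\le 2C_s h^s \norm{0,2}{v}\,\snorm{s,2}{v},
\]
and then split via Young to produce $\Delta t\,\norm{0,2}{v}^2 + C\tfrac{h^{2s}}{\Delta t}\snorm{s,2}{v}^2$. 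The second piece involves $\snorm{s,2}{v}^2$, whereas the surplus involves $\snorm{s,2}{(I-\mathcal{I}_h)v}^2$. These are not comparable: the surplus can be arbitrarily small relative to $\snorm{s,2}{v}^2$ (take $v$ already in the range of $\mathcal{I}_h$), so no ``rearrangement'' will let the surplus absorb that term with an $O(\Delta t)$ remainder.

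The missing ingredient is the sharper approximation bound
\[
\norm{0,2}{(I-\mathcal{I}_h)v}\le C_s h^s\,\snorm{s,2}{(I-\mathcal{I}_h)v},
\]
obtained by applying (P\ref{p:s}) to $(I-\mathcal{I}_h)v$ and using that $\mathcal{I}_h$ is idempotent (true for any interpolation operator of the form $\mathcal{I}_h v=\tilde{\mathcal{I}}_h[(v(x_j))_j]$, hence for both the spline and Hermite cases). With this in hand the Young split becomes
\[
2C_s h^s \norm{0,2}{v}\,\snorm{s,2}{(I-\mathcal{I}_h)v}
\le C_s^2\Delta t\,\norm{0,2}{v}^2+\tfrac{h^{2s}}{\Delta t}\snorm{s,2}{(I-\mathcal{I}_h)v}^2,
\]
and the second piece is now \emph{exactly} cancelled by the Pythagorean surplus. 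The remaining term $\norm{0,2}{(I-\mathcal{I}_h)v}^2\le C_s^2 h^{2s}\snorm{s,2}{(I-\mathcal{I}_h)v}^2\le C_s^2\Delta t\cdot\tfrac{h^{2s}}{\Delta t}\snorm{s,2}{v}^2$ is genuinely $O(\Delta t)$, and the proof of \eqref{lem2} closes with constant $C=C_s^2$. The argument for \eqref{lemw} is identical, with the Young weight chosen so that the surplus (now $-\snorm{s,2}{(I-\mathcal{I}_h)v}^2$, without the $h^{2s}/\Delta t$ factor) again cancels exactly; the hypothesis $h^{2s}/\Delta t\le C_\Delta$ then converts the residual $C_s^2 h^{2s}$ factor into $C_s^2 C_\Delta\Delta t$. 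Note that the paper itself does not give a proof here but refers to \cite{25Ta}; the argument above is the standard one in that reference.
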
 

\subsection{Consistency} 
In this subsection, we establish estimates for $ \tau_{\Delta t}^n $ defined in \eqref{tau}. Let $ \delta = (\nu \Delta t)^{1/3} $. Then $ \tau_{\Delta t}^n $ can be expressed as 
\begin{align} 
  \tau_{\Delta t}^n (x) & = \frac{1}{\Delta t} \left[u^n (x) - \sum_{(\gamma, \lambda) \in \Lambda} \gamma u^{n - 1} (x - f (u^n (x)) \Delta t + \lambda \delta) \right]. 
\end{align} 
By Taylor's theorem, we have 
\begin{align} 
  & u^{n-1} (x - f (u^n (x)) \Delta t + \lambda \delta) \\ 
  & = u^{n-1} (x) + (- f (u^n (x)) \Delta t + \lambda \delta) \partial_x u^{n-1} (x) \\ 
  &\quad + \frac{1}{2} (- f (u^n (x)) \Delta t + \lambda \delta)^2 \partial_x^2 u^{n-1} (x) 
    + \frac{1}{6} (- f (u^n (x)) \Delta t + \lambda \delta)^3 \partial_x^3 u^{n-1} (x) \\ 
  & \quad + \frac{\Delta t^{4/3}}{6} \int_{0}^{- f (u^n(x)) \Delta t^{2/3} + \lambda \nu^{1/3}} y^3 \partial_x^4 u (x + \Delta t^{1/3} y, t_{n-1}) \, \mathrm{d}y. \label{tay} 
\end{align} 
Using \eqref{ivp1} and \eqref{tay} to \eqref{tau}, we have 
\begin{align} 
  \tau_{\Delta t}^n = \tau_{\Delta t,1}^n + \tau_{\Delta t,2}^n + \tau_{\Delta t,3}^n + \tau_{\Delta t,4}^n, 
\end{align} 
where 
\begin{align} 
  \tau_{\Delta t,1}^n (x) & = \frac{1}{\Delta t} \left(u^n(x) - u^{n-1}(x)\right) - \partial_t u (x, t_{n-1}), \\ 
  \tau_{\Delta t,2}^n (x) & = \left(f (u^n(x)) - f (u^{n-1}(x))\right) \partial_x u (x, t_{n-1}), \\ 
  \tau_{\Delta t,3}^n (x) & = - \frac{\Delta t}{2} f (u^n (x))^2 + \frac{\Delta t^2}{6} f (u^n (x))^3 \partial_x^3 u (x, t_{n-1}), \label{tu3} \\ 
  \tau_{\Delta t,4}^n (x) & = \sum_{(\gamma, \lambda) \in \Lambda} \tau_{\Delta t,4,\gamma,\lambda}^n (x), 
\end{align} 
and 
\begin{align} 
  \tau_{\Delta t,4,\gamma,\lambda} (x) = \frac{\gamma \Delta t^{1/3}}{6} \int_{0}^{- f (u^n(x)) \Delta t^{2/3} + \lambda \nu^{1/3}} y^3 \partial_x^4 u (x + \Delta t^{1/3} y, t_{n-1}) \, \mathrm{d}y. \label{tu4}
\end{align} 
By Taylor's theorem, we have 
\begin{align} 
  \tau_{\Delta t,1}^n (x) & = \Delta t \int_{0}^{1} \theta \partial_t^2 u (x, t_{n-1} + \theta \Delta t) \, \mathrm{d} \theta \label{tu1} 
\end{align} 
and 
\begin{align} 
  \tau_{\Delta t,2}^n (x) = \Delta t \partial_x u (x, t_{n-1}) \int_{0}^{1} f^\prime (u (x, t + \theta \Delta t)) \partial_t u (x, t + \theta \Delta t) \, \mathrm{d} \theta. \label{tu2} 
\end{align} 
From \eqref{tu3}, \eqref{tu4}, \eqref{tu1} and \eqref{tu2}, we deduce that there exists a positive constant $ C_{\tau}^{(1)} $, independent of $ \Delta t $, such that 
\begin{align} 
  \norm{0, 2}{\tau_{\Delta t,i}^n} & \leq C_{\tau}^{(1)} \Delta t \quad (i = 1,2,3), \\ 
  \norm{0, 2}{\tau_{\Delta t,4}^n} & \leq C_{\tau}^{(1)} \Delta t^{1/3}, \\ 
  \snorm{s, 2}{\tau_{\Delta t,i}^n} & \leq C_{\tau}^{(1)} \Delta t  \quad (i = 1,2,3), \\ 
  \snorm{s, 2}{\tau_{\Delta t,4}^n} & \leq C_{\tau}^{(1)} \Delta t^{1/3}, 
\end{align} 
where $ C_{\tau}^{(1)} $ depend on $ \nu $, $ \norm{W^{2,\infty} (t_{n-1}, t_n; H^s (\mathbb{T}))}{u} $, $ \norm{L^{\infty} (t_{n-1}, t_n; H^{s+4} (\mathbb{T}))}{u} $ and $ \norm{W^{{s+1},\infty} (\mathbb{R})}{f} $. 
Therefore, we obtain that there exists a constant $ C_{\tau}^{(2)} $, independent of $ \Delta t $, such that 
\begin{align} 
  \norm{s,2,\ast}{\tau_{\Delta t}^n} \leq C_{\tau}^{(2)} \Delta t^{1/3}. 
\end{align} 
This estimate implies that $ \Lambda_4 $ defined in \eqref{g4} satisfies the condition (D\ref{ad3}) with $ r = 1/3 $. 

In a similar manner, when $ \Lambda_5 $ is employed, we obtain that there exists a constant $ C_{\tau}^{(3)} $, independent of $ \Delta t $, such that 
\begin{align} 
  \norm{s,2,\ast}{\tau_{\Delta t}^n} \leq C_{\tau}^{(3)} \Delta t^{2/3}, 
\end{align} 
which implies that $ \Lambda_5 $ satisfies the condition (D\ref{ad3}) with $ r = 2 / 3 $. 

\subsection{Proofs of the main theorems} 

The following lemma concerns the hypothesis (D\ref{ad1}), which is related to the stability of the operator $ \mathcal{S}_{\Lambda,\Delta t}^{\mathrm{D}} $. 

\begin{lemma} \label{lem:sd} 
  Let $ s $ be a positive integer, and suppose that $ \Lambda $ is either $ \Lambda_4 $ or $ \Lambda_5 $. For any $ v \in H^s (\mathbb{T}) $, the following inequalities hold: 
  \begin{align} 
    \norm{s,2,\ast}{\mathcal{S}_{\Lambda, \Delta t}^{\mathrm{D}} v} \leq \norm{s,2,\ast}{v}, \label{sv4} 
  \end{align} 
  and 
  \begin{align} 
    \norm{s,2,\Delta}{\mathcal{S}_{\Lambda, \Delta t}^{\mathrm{D}} v} \leq \norm{s,2,\Delta}{v}. \label{sv5} 
  \end{align} 
\end{lemma}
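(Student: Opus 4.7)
The plan is to exploit the translation-invariance of $\mathcal{S}_{\Lambda,\Delta t}^{\mathrm{D}}$ through Fourier series on $\mathbb{T}$. Since the operator is a finite linear combination of translations, it is a Fourier multiplier: for $v(x) = \sum_{k \in \mathbb{Z}} \hat v(k)\, e^{2 \pi i k x}$,
\[
\widehat{\mathcal{S}_{\Lambda,\Delta t}^{\mathrm{D}} v}(k) = M_\Lambda(2 \pi k \delta)\, \hat v(k), \qquad M_\Lambda(\theta) := \sum_{(\gamma, \lambda) \in \Lambda} \gamma\, e^{i \lambda \theta},
\]
with $\delta = (\nu \Delta t)^{1/3}$. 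Since $\partial_x^s$ is itself the Fourier multiplier by $(2 \pi i k)^s$, both target norms diagonalize in the Fourier basis: $\|v\|^2 = \sum_k W(k) |\hat v(k)|^2$ with $W(k) = 1 + (2 \pi k)^{2s}$ for $\|\cdot\|_{s,2,\ast}$ and $W(k) = 1 + (h^{2s}/\Delta t)(2 \pi k)^{2s}$ for $\|\cdot\|_{s,2,\Delta}$. By Parseval, both inequalities \eqref{sv4} and \eqref{sv5} reduce simultaneously to the single pointwise symbol bound
\[
|M_\Lambda(\theta)| \leq 1 \quad \text{for every } \theta \in \mathbb{R}.
\]

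With this reduction in hand, the remaining work is a direct verification of $|M_\Lambda(\theta)| \leq 1$ for each of the two parameter sets. For $\Lambda_5$, setting $u = e^{i 2 \theta}$ on the unit circle gives
\[
16\, M_{\Lambda_5}(\theta) = 3 u^{-1} + 6 + 12 u - 6 u^2 + u^3,
\]
so that $256 \bigl(1 - |M_{\Lambda_5}(\theta)|^2\bigr)$ is a real trigonometric polynomial in $\theta$; I would rewrite it in the variable $c = \cos(2 \theta) \in [-1, 1]$ via Chebyshev identities and then exhibit it as a non-negative polynomial on $[-1, 1]$. Since $M_{\Lambda_5}(0) = 1$ by the consistency condition \eqref{g5s}, one expects a factorization of the form $(1 - c)^m P(c)$ with $P \geq 0$, which would conclude the argument. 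The parallel calculation, now with the substitution $u = e^{i a \theta}$ for $a = \sqrt[3]{4}$, handles $\Lambda_4$.

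The main obstacle is this final algebraic step: producing, for each of $\Lambda_4$ and $\Lambda_5$, a manifestly non-negative representation, such as a sum of squares or a product factorization, of $1 - |M_\Lambda(\theta)|^2$. Once such a representation is obtained, the reduction via Parseval is immediate and simultaneously delivers both \eqref{sv4} and \eqref{sv5}; in particular, the $L^2$-stability inequality listed as the first item of (D\ref{ad1}) (the $s = 0$ instance, not stated explicitly in the lemma) follows from the same symbol bound.
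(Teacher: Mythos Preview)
Your Fourier-multiplier reduction is correct and is essentially the dual, in frequency space, of what the paper does in physical space. Both arguments boil down to the same inequality $|M_\Lambda(\theta)|\le 1$; the paper simply never writes down the symbol. Instead it computes $\|v\|_{0,2}^2-\|\mathcal{S}_{\Lambda,\Delta t}^{\mathrm{D}}v\|_{0,2}^2$ directly as a finite combination of inner products $(v,\mathcal{T}_{ky}v)_{\mathbb T}$ and then \emph{recognizes} that combination as the $L^2$-norm squared of an explicit linear combination of translates of $v$. For $\Lambda_4$ (with $y=\sqrt[3]{4}\,\delta$) the paper obtains
\[
\|v\|_{0,2}^2-\|\mathcal{S}_{\Lambda_4,\Delta t}^{\mathrm{D}}v\|_{0,2}^2=\tfrac{1}{16}\,\|\mathcal{T}_{y}v-v-\mathcal{T}_{-y}v+\mathcal{T}_{-2y}v\|_{0,2}^2,
\]
and for $\Lambda_5$
\[
\|v\|_{0,2}^2-\|\mathcal{S}_{\Lambda_5,\Delta t}^{\mathrm{D}}v\|_{0,2}^2=\tfrac{3}{256}\,\|v-2\mathcal{T}_{2\delta}v+2\mathcal{T}_{6\delta}v-\mathcal{T}_{8\delta}v\|_{0,2}^2.
\]
Via Parseval these identities are exactly the ``manifestly non-negative representation'' you were looking for: in your notation (with $u=e^{i\sqrt[3]{4}\,\theta}$, resp.\ $u=e^{i2\theta}$) they read $1-|M_{\Lambda_4}|^2=\tfrac{1}{16}\,|u^{-1}-1-u+u^{2}|^{2}$ and $1-|M_{\Lambda_5}|^2=\tfrac{3}{256}\,|1-2u^{-1}+2u^{-3}-u^{-4}|^{2}$. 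So the step you flagged as the ``main obstacle'' is not open; it is precisely what the paper supplies, and once you have the $L^2$ bound, applying it to $\partial_x^s v$ gives the seminorm bound and hence both \eqref{sv4} and \eqref{sv5}, as you note.

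What each route buys: your Fourier framing makes the simultaneous reduction of the $\|\cdot\|_{s,2,\ast}$ and $\|\cdot\|_{s,2,\Delta}$ inequalities to the single scalar bound $|M_\Lambda|\le1$ completely transparent, and it explains \emph{why} one should expect a squared-modulus factorization (Fej\'er--Riesz). The paper's physical-space computation, on the other hand, hands you the factorization immediately without the Chebyshev detour you sketched, at the cost of looking a bit ad hoc. Substantively the arguments are the same.
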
 

\begin{proof} 
  For any $ y \in \mathbb{R} $, we define the translation operator $ \mathcal{T}_{y}: L^2 (\mathbb{T}) \to L^2 (\mathbb{T}) $ by $ (\mathcal{T}_{y} v) (x) = v (x - y) $ ($ v \in L^2 (\mathbb{T}) $). 
  Then, the operator $ \mathcal{S}_{\Lambda, \Delta t}^{\mathrm{D}} $ can be written as 
  \begin{align} 
    \mathcal{S}_{\Lambda, \Delta t}^{\mathrm{D}} v = \sum_{(\gamma, \lambda) \in \Lambda} \gamma \mathcal{T}_{-\lambda \delta} v. 
  \end{align} 
  For any $ y_1 $, $ y_2 $, $ y_3 \in \mathbb{R} $, we have 
  \begin{align} 
  \left(\mathcal{T}_{y_1 + y_2} v, \mathcal{T}_{y_1 + y_3} v\right)_\mathbb{T} = \left(\mathcal{T}_{y_2} v, \mathcal{T}_{y_3} v\right)_\mathbb{T}, \label{tr} 
  \end{align} 
  where the $ L^2 $ inner product $ \left(\,\cdot\, , \,\cdot\,\right)_\mathbb{T} $ is defined by $ \left(v_1, v_2 \right)_\mathbb{T} = \int_\mathbb{T} v_1 (x) v_2 (x) \, \mathrm{d}x $. 

  First, we consider the case $ \Lambda = \Lambda_4 $. Using \eqref{tr}, we have 
  \begin{align} 
    & \left(\norm{0,2}{v}^2 - \norm{0,2}{\frac{1}{4} \left(\mathcal{T}_{y} v + v + 3 \mathcal{T}_{-y} v - \mathcal{T}_{-2y} v\right)}^2\right)_{\mathbb{T}} \\ 
    & = \frac{1}{4} \left(v, v \right)_\mathbb{T} - \frac{1}{8} \left(v, \mathcal{T}_{y} v \right)_\mathbb{T} - \frac{1}{8} \left(v, \mathcal{T}_{2y} v \right)_\mathbb{T} + \frac{1}{16} \left(v, \mathcal{T}_{3y} v \right)_\mathbb{T} \\ 
    & = \frac{1}{16} \norm{0,2}{\mathcal{T}_{y} v - v - \mathcal{T}_{-y} v + \mathcal{T}_{-2y} v}^2 \geq 0. \label{sv2} 
  \end{align} 
  Substituting $ y = \sqrt[3]{4} \delta $, we obtain 
  \begin{align} 
    \norm{0,2}{\mathcal{S}_{\Lambda, \Delta t}^{\mathrm{D}} v} \leq \norm{0,2}{v}. \label{sv1} 
  \end{align} 

  Let $ w \in H^s (\mathbb{T}) $. Substituting $ v = \partial_x^s w $ in \eqref{sv2}, we have 
  \begin{align} 
    \snorm{s,2}{\mathcal{S}_{\Lambda, \Delta t}^{\mathrm{D}} w} \leq \snorm{s,2}{w}. \label{sv3} 
  \end{align} 
  Combining \eqref{sv2} and \eqref{sv3}, we obtain \eqref{sv4} and \eqref{sv5}. 

  Next, we consider the case $ \Lambda = \Lambda_5 $. Using \eqref{tr}, we have 
  \begin{align} 
    & \norm{0,2}{v}^2 - \norm{0,2}{\frac{3}{16} \mathcal{T}_{2\delta} v + \frac{3}{8} v + \frac{3}{4} \mathcal{T}_{-2\delta} v - \frac{3}{8} \mathcal{T}_{-4\delta} v + \frac{1}{16} \mathcal{T}_{-6\delta} v}^2 \\ 
    & = \frac{3}{256} \left(5 (v, v)_{\mathbb{T}} - 4 (v, \mathcal{T}_{2\delta} v)_{\mathbb{T}} - 4 (v, \mathcal{T}_{4\delta} v)_{\mathbb{T}} + 4 (v, \mathcal{T}_{6\delta} v)_{\mathbb{T}} - (v, \mathcal{T}_{8\delta} v)_{\mathbb{T}}\right) \\ 
    & = \frac{3}{256} \norm{0,2}{v - 2 \mathcal{T}_{2\delta} v + 2 \mathcal{T}_{6\delta} v - \mathcal{T}_{8\delta} v}^2 \geq 0, 
  \end{align} 
  which implies 
  \begin{align} 
    \norm{0,2}{\mathcal{S}_{\Lambda, \Delta t}^{\mathrm{D}} v} \leq \norm{0,2}{v}. \label{sv6} 
  \end{align} 
  By substituting $ w = \partial_x^s v $ to \eqref{sv6}, we obtain 
  \begin{align} 
    \snorm{s,2}{\mathcal{S}_{\Lambda, \Delta t}^{\mathrm{D}} w} \leq \snorm{s,2}{w}. \label{sv7} 
  \end{align} 
  From \eqref{sv6} and \eqref{sv7}, we deduce \eqref{sv4} and \eqref{sv5} for the case $ \Lambda = \Lambda_5 $. 
\end{proof} 

In the following, we give the proof of Theorem \ref{thm:s}. The proof consists of three steps: 
\begin{enumerate}[Step 1.] 
  \item We first establish the estimate 
  \begin{align} 
    \norm{s, 2, \ast}{u^n - u_h^n}^2 & \leq \left(1 + C (u, u_h^{n-1}) \Delta t \right) \norm{s, 2, \ast}{u^{n-1} - u_h^{n-1}}^2 \\ & \quad + C (u, u_h^{n-1}) \Delta t^{1 + \bar{\varepsilon}}, \label{step1} 
  \end{align} 
  for sufficiently small $ \Delta t $, where $ C (u, u_h^{n-1}) $ and $ \bar{\varepsilon} $ are positive constants independent of $ \Delta t $. 
  \item Next, we derive the bound 
  \begin{align} 
    \norm{s, 2, \ast}{u_h^n} \leq 2 \sup_{t\in (0, T)} \norm{s, 2, \ast}{u (t)}, \label{step2}
  \end{align} 
  for sufficiently small $ h $ and $ \Delta t $. 
  \item Finally, combining \eqref{step1} and \eqref{step2}, we obtain \eqref{b30}. 
\end{enumerate} 

\begin{proof}[Proof of Theorem~\ref{thm:s}] 
  In this proof, we do not write the dependence on $ \norm{s + 1, \infty}{f} $ explicitly. We denote $ u^n - u_h^n $ by $ e_h^n $. 
  
  \paragraph{Step 1.} 
  First, we estimate $ e_h^0 = (I - \mathcal{I}_h) u_0 $. 
  By (P\ref{p:s}) and (P\ref{p:q}), we have 
  \begin{align} 
    \norm{s,2,\ast}{e_h^0} & = \left(\norm{0,2}{(I - \mathcal{I}_h) u_0}^2 + \snorm{s,2}{(I - \mathcal{I}_h) u_0}^2\right)^{1/2} \\ 
    & \leq C_0 h^{q - s}, \label{b0} 
  \end{align} 
  for some positive constant $ C_0 $ depending on $  \snorm{q,2}{u_0} $. 

  Next, we consider $ e_h^n $ for $ n \geq 1 $. 
  By \eqref{sl2}, we have 
  \begin{align} 
    e_h^n & = u^n - u_h^n \\ 
    & = u^n - \mathcal{I}_h \left(\mathcal{S}_{f, \Delta t}^{\mathrm{A}} (\mathcal{S}_{\Lambda, \Delta t}^{\mathrm{D}} u_h^{n-1})\right) \\ 
    & = \mathcal{I}_h \left(u^n - \mathcal{S}_{f, \Delta t}^{\mathrm{A}} ( \mathcal{S}_{\Lambda, \Delta t}^{\mathrm{D}} u_h^{n-1})\right) + (I - \mathcal{I}_h) u^n. \label{b6} 
  \end{align} 
  We define 
  \begin{align}
    \eta_h^n & = (\mathcal{S}_{\Lambda, \Delta t}^{\mathrm{D}} u^{n - 1}) \circ {X_{\Delta t}^1} [f \circ \mathcal{S}_{f, \Delta t}^{\mathrm{A}} (\mathcal{S}_{\Lambda, \Delta t}^{\mathrm{D}} u_h^{n - 1})] - \mathcal{S}_{f, \Delta t}^{\mathrm{A}} (\mathcal{S}_{\Lambda, \Delta t}^{\mathrm{D}} u_h^{n - 1}), \label{cs4} \\ 
    \theta_h^n & = (\mathcal{S}_{\Lambda, \Delta t}^{\mathrm{D}} u^{n - 1}) \circ {X_{\Delta t}^1} [f \circ u^n] \\ & \quad - (\mathcal{S}_{\Lambda, \Delta t}^{\mathrm{D}} u^{n - 1}) \circ {X_{\Delta t}^1} [f \circ \mathcal{S}_{f, \Delta t}^{\mathrm{A}} (\mathcal{S}_{\Lambda, \Delta t}^{\mathrm{D}} u_h^{n - 1})], \label{cs5}
  \end{align} 
  and 
  \begin{align} 
    \rho_h^n = (I - \mathcal{I}_h) u^n. 
  \end{align} 
  Recall that $ \tau_{\Delta t}^n $ is defined by \eqref{tau}. With these notations, we can rewrite \eqref{b6} as 
  \begin{align}
    e_h^n = \mathcal{I}_{h} (\eta_h^n + \theta_h^n + \Delta t \tau_{\Delta t}^n) + \rho_h^n. \label{b6-1}
  \end{align} 
  By \eqref{po2}, we have 
  \begin{align}
    \snorm{s, 2}{e_h^n}^2 & = \snorm{s, 2}{\mathcal{I}_{h} (\eta_h^n + \theta_h^n + \Delta t \tau_{\Delta t}^n)}^2 + \snorm{s, 2}{\rho_h^n}^2. \label{b6-2} 
  \end{align} 
  For any $ a_1, a_2, a_3 \in \mathbb{R} $ and $ \Delta t \in (0, 1) $, it holds that 
  \begin{align} 
    (a_1 + a_2 + a_3)^2 \leq (1 + \Delta t) a_1^2 + \frac{4}{\Delta t} (a_2^2 + a_3^2). \label{b6-3} 
  \end{align} 
  Applying \eqref{b6-3} to \eqref{b6-2}, we obtain 
  \begin{align} 
    \snorm{s, 2}{e_h^n}^2 & = \left(1 + \Delta t \right) \snorm{s, 2}{\mathcal{I}_{h} \eta_h^n}^2 + \frac{4}{\Delta t} \left(\snorm{s, 2}{\mathcal{I}_{h} \theta_h^n}^2 + \Delta t^2 \snorm{s, 2}{\mathcal{I}_{h} \tau_{\Delta t}^n}^2\right) + \snorm{s, 2}{\rho_h^n}^2. \label{b12-1} 
  \end{align} 
  Taking the $ L^2 $-norm of \eqref{b6-2}, we have 
  \begin{align} 
    \norm{0,2}{e_h^n}^2 = \norm{0,2}{\mathcal{I}_{h} (\eta_h^n + \theta_h^n + \Delta t \tau_{\Delta t}^n) + \rho_h^n}^2. \label{b6-4} 
  \end{align} 
  For any $ a_1, a_2, a_3, a_4 \in \mathbb{R} $ and $ \Delta t \in (0, 1) $, it holds that 
  \begin{align} 
    (a_1 + a_2 + a_3 + a_4)^2 \leq (1 + \Delta t) a_1^2 + \frac{6}{\Delta t} (a_2^2 + a_3^2 + a_4^2). \label{b6-5} 
  \end{align} 
  Applying \eqref{b6-5} to \eqref{b6-4}, we obtain 
  \begin{align} 
    \norm{0, 2}{e_h^n}^2 & \leq (1 + \Delta t) \norm{0, 2}{\mathcal{I}_{h} \eta_h^n}^2 \\ & \quad + \frac{6}{\Delta t} \left(\norm{0, 2}{\mathcal{I}_{h} \theta_h^n}^2 + \Delta t \norm{0, 2}{\mathcal{I}_{h} \tau_{\Delta t}^n}^2 + \norm{0, 2}{\mathcal{I}_{h} \rho_h^n}^2\right). \label{b12-2} 
  \end{align} 
  Combining \eqref{b12-1} and \eqref{b12-2}, we obtain 
  \begin{align} 
    \norm{s, 2, \ast}{e_h^n}^2
    & \leq (1 + \Delta t) \norm{s, 2, \ast}{\mathcal{I}_{h} \eta_h^n}^2 \\ & \quad + \frac{6}{\Delta t} \left(\norm{s, 2, \ast}{\mathcal{I}_{h} \theta_h^n}^2 + \Delta t^2 \norm{s, 2, \ast}{\mathcal{I}_{h} \tau_{\Delta t}^n}^2 + \norm{0, 2}{\rho_h^n}^2\right)+ \snorm{s, 2}{\rho_h^n}^2. 
  \end{align} 
  By \eqref{lem2} and $ \Delta t < 1 $, we have 
  \begin{align} 
    \norm{s, 2, \ast}{e_h^n}^2 & \leq (1 + C \Delta t) \norm{s, 2, \ast}{\eta_h^n}^2 + \frac{C}{\Delta t} \norm{s, 2, \ast}{\theta_h^n}^2 \\ 
    & \quad + C \Delta t \norm{s, 2, \ast}{\tau_{\Delta t}^n}^2 + \frac{C}{\Delta t} \norm{0, 2} {\rho_h^n}^2 + \snorm{s, 2}{\rho_h^n}^2. \label{b12} 
  \end{align} 

  Let us estimate $ \eta_h^n $. By the definitions of $ \eta_h^n $ and $ \mathcal{S}_{f, \Delta t}^{\mathrm{A}} $, we have 
  \begin{align} 
    \eta_h^n & = (\mathcal{S}_{\Lambda, \Delta t}^{\mathrm{D}} u^{n-1}) \circ {X_{\Delta t}^1} [f \circ \mathcal{S}_{f, \Delta t}^{\mathrm{A}} (\mathcal{S}_{\Lambda, \Delta t}^{\mathrm{D}} u_h^{n - 1})] \\ & \quad - (\mathcal{S}_{\Lambda, \Delta t}^{\mathrm{D}} u_h^{n-1}) \circ {X_{\Delta t}^1} [f \circ \mathcal{S}_{f, \Delta t}^{\mathrm{A}} (\mathcal{S}_{\Lambda, \Delta t}^{\mathrm{D}} u_h^{n - 1})] \\
    & = (\mathcal{S}_{\Lambda, \Delta t}^{\mathrm{D}} e_h^{n - 1}) \circ {X_{\Delta t}^1} [f \circ \mathcal{S}_{f, \Delta t}^{\mathrm{A}} (\mathcal{S}_{\Lambda, \Delta t}^{\mathrm{D}} u_h^{n - 1})]. \label{cs1} 
  \end{align} 
  Applying \eqref{c2} to \eqref{cs1}, we have 
  \begin{align} 
    \norm{s,2,\ast}{\eta_h^n} \leq \left(1 + C_{\mathrm{P}} (\norm{s,2,\ast}{\mathcal{S}_{f, \Delta t}^{\mathrm{A}} (\mathcal{S}_{\Lambda, \Delta t}^{\mathrm{D}} u_h^{n - 1})}) \Delta t \right) \norm{s,2,\ast}{\mathcal{S}_{\Lambda, \Delta t}^{\mathrm{D}} e_h^{n - 1}}. \label{cs3} 
  \end{align} 
  Let us define 
  \begin{align} 
    \Delta t_{h,1}^n =  
    \begin{cases}
      \min \left\{1, \frac{1}{3 \snorm{1,\infty}{f}\snorm{1,\infty}{u_h^{n-1}}} \right\} & \text{if} \quad \snorm{1,\infty}{f}\snorm{1,\infty}{u_h^{n-1}} > 0, \\
      1 & \text{if} \quad \snorm{1,\infty}{f}\snorm{1,\infty}{u_h^{n-1}} = 0. 
    \end{cases} 
  \end{align} 
  If $ \Delta t \leq \Delta t_{h,1}^n $, by \eqref{a2} and \eqref{sv4}, we have 
  \begin{align} 
    \norm{s, 2, \ast}{\mathcal{S}_{f, \Delta t}^{\mathrm{A}} (\mathcal{S}_{\Lambda, \Delta t}^{\mathrm{D}} u_h^{n - 1})} & \leq \left(1 + C_{\mathrm{P}} (\norm{s, 2, \ast}{\mathcal{S}_{\Lambda, \Delta t}^{\mathrm{D}} u_h^{n - 1}}) \Delta t \right) \norm{s, 2, \ast}{\mathcal{S}_{\Lambda, \Delta t}^{\mathrm{D}} u_h^{n - 1}} \\ 
    & \leq C_{\mathrm{P}} (\norm{s, 2, \ast}{u_h^{n - 1}}). \label{b8} 
  \end{align} 
  Applying \eqref{sv4} and \eqref{b8} to \eqref{cs3}, we have 
  \begin{align} 
    \norm{s,2,\ast}{\eta_h^n} \leq \left(1 + C_{\mathrm{P}} (\norm{s,2,\ast}{u_h^{n - 1}}) \Delta t \right) \norm{s,2,\ast}{e_h^{n - 1}}. \label{b2} 
  \end{align} 

  Let us estimate $ \rho_h^n $. By (P\ref{p:s}) and (P\ref{p:q}), we have 
  \begin{align} 
    \norm{0,2}{\rho_h^n} = \norm{0,2}{(I - \mathcal{I}_h) u^n} \leq C h^{q} \snorm{q,2}{u^n}, \label{b31-1} 
  \end{align} 
  and 
  \begin{align} 
    \snorm{s,2}{\rho_h^n} = \snorm{s, 2}{(I - \mathcal{I}_h) u^n} \leq C h^{q - s} \snorm{q,2}{u^n}. \label{b31-2} 
  \end{align} 

  Let us estimate $ \theta_h^n $. By \eqref{d4}, we have 
  \begin{align} 
    & \norm{s, 2, \ast}{\theta_h^n} \\ 
    & = \norm{s,2,\ast}{(\mathcal{S}_{\Lambda, \Delta t}^{\mathrm{D}} u^{n - 1}) \circ {X_{\Delta t}^1} [f \circ u^n] - (\mathcal{S}_{\Lambda, \Delta t}^{\mathrm{D}} u^{n - 1}) \circ {X_{\Delta t}^1} [f \circ \mathcal{S}_{f, \Delta t}^{\mathrm{A}} (\mathcal{S}_{\Lambda, \Delta t}^{\mathrm{D}} u_h^{n - 1})]} \\ 
    & \leq C_{\mathrm{P}} (\norm{s,2,\ast}{u^n}, \norm{s,2,\ast}{\mathcal{S}_{f, \Delta t}^{\mathrm{A}} (\mathcal{S}_{\Lambda, \Delta t}^{\mathrm{D}} u_h^{n - 1})})  \Delta t \\ & \qquad \cdot \norm{s+1,\infty}{\mathcal{S}_{\Lambda, \Delta t}^{\mathrm{D}} u^{n - 1}} \norm{s,2,\ast}{u^n - \mathcal{S}_{f, \Delta t}^{\mathrm{A}} (\mathcal{S}_{\Lambda, \Delta t}^{\mathrm{D}} u_h^{n - 1})}. \label{cs7} 
  \end{align} 
  By \eqref{tau}, \eqref{cs4} and \eqref{cs5}, we have 
  \begin{align} 
    u^n - \mathcal{S}_{f, \Delta t}^{\mathrm{A}} (\mathcal{S}_{\Lambda, \Delta t}^{\mathrm{D}} u_h^{n - 1}) = \eta_h^n + \theta_h^n + \tau_{\Delta t}^n \Delta t. \label{cs6} 
  \end{align} 
  Applying \eqref{ad2i}, \eqref{b8} and \eqref{cs6} to \eqref{cs7}, we have 
  \begin{align} 
    \norm{s, 2, \ast}{\theta_h^n} & \leq C_{\mathrm{P}} (\norm{s,2,\ast}{u^n}, \norm{s,2,\ast}{u_h^{n - 1}}) \Delta t \norm{s+1,\infty}{u^{n - 1}} \norm{s,2,\ast}{\eta_h^n + \theta_h^n + \tau_{\Delta t}^n \Delta t}. 
  \end{align} 
  Thus, there exists a positive constant $ C_{\mathrm{P}}^{(7),n} $ which depends polynomially on $ \norm{s,2,\ast}{u^n} $, $ \norm{s+1,\infty}{u^{n - 1}} $ and $ \norm{s,2,\ast}{u_h^{n - 1}} $ such that, if $ \Delta t \leq \Delta t_{h,1}^n $, 
  \begin{align} 
    \norm{s, 2, \ast}{\theta_h^n} & \leq C_{\mathrm{P}}^{(7),n} \Delta t \left(\norm{s,2,\ast}{\eta_h^n} + \norm{s,2,\ast}{\theta_h^n} + \Delta t \norm{s,2,\ast}{\tau_{\Delta t}^n}\right). \label{b13-2} 
  \end{align} 
  We define $ \Delta t_{h,2}^n = \min \{\Delta t_{h,1}^n, 1/(2 C_{\mathrm{P}}^{(7),n})\} $. If $ \Delta t \leq \Delta t_{h,2}^n $, then we have 
  \begin{align} 
    \frac{1}{1 - C_{\mathrm{P}}^{(7),n} \Delta t} \leq \frac{1}{2}. \label{dt2} 
  \end{align} 
  Using \eqref{dt2}, \eqref{b13-2} becomes 
  \begin{align} 
    \norm{s, 2, \ast}{\theta_h^n} & \leq \frac{1}{2} C_{\mathrm{P}}^{(7),n} \Delta t \left(\norm{s,2,\ast}{\eta_h^n} + \Delta t \norm{s,2,\ast}{\tau_{\Delta t}^n}\right). \label{b13} 
  \end{align} 
  In the rest of the proof, the dependence on $ \norm{L^{\infty} (W^{s + 1, \infty})}{u} $ will not be written explicitly. That is, even if $ C_{\mathrm{P}} $ depends on $ \norm{L^{\infty} (W^{s + 1, \infty})}{u} $ and $ \norm{s,2,\ast}{u_h^{n-1}} $, we simply denote it by $ C_{\mathrm{P}}(\norm{s,2,\ast}{u_h^{n-1}}) $. Applying \eqref{b13} to \eqref{b12}, we have 
  \begin{align} 
    \norm{s, 2, \ast}{e_h^n}^2 
    & \leq \norm{s, 2, \ast}{\eta_h^n}^2 
    + C_{\mathrm{P}} (\norm{s, 2, \ast}{u_h^{n - 1}} ) \Delta t \left(\norm{s, 2, \ast}{\eta_h^n}^2 + \norm{s, 2, \ast}{\tau_{\Delta t}^n}^2 \right) \\
    & \quad + \frac{C}{\Delta t} \norm{0,2}{\rho_h^n}^2 + \snorm{s,2}{\rho_h^n}^2. \label{b20-2} 
  \end{align} 
  Applying \eqref{b2}, \eqref{b31-1} and \eqref{b31-2} to \eqref{b20-2}, we obtain 
  \begin{align} 
    \norm{s, 2, \ast}{e_h^n}^2 & \leq \left(1 + C_{\mathrm{P}} (\norm{s, 2, \ast}{u_h^{n - 1}} ) \Delta t \right) \norm{s, 2, \ast}{e_h^{n - 1}}^2 \\ & \quad + C_{\mathrm{P}} (\norm{s, 2, \ast}{u_h^{n - 1}}) \Delta t \norm{s, 2, \ast}{\tau_{\Delta t}^{n}}^2 + C\left(\frac{h^{2q}}{\Delta t} + h^{2 (q - s)}\right)\snorm{q, 2}{u^n}^2. \label{b20} 
  \end{align} 
  By \eqref{ad3i} and \eqref{hyp:d}, \eqref{b20} becomes 
  \begin{align} 
    \norm{s, 2, \ast}{e_h^n}^2 & \leq \left(1 + C_{\mathrm{P}} (\norm{s, 2, \ast}{u_h^{n - 1}} ) \Delta t \right) \norm{s, 2, \ast}{e_h^{n - 1}}^2 \\ 
    & \quad + C_{\mathrm{P}} (\norm{s, 2, \ast}{u_h^{n - 1}}) K (u)^2 \Delta t^{1 + 2r} + C \Delta t^{1 + \varepsilon}\snorm{q, 2}{u^n}^2. \label{b18} 
  \end{align} 
  Let $ \bar{\varepsilon} = \min \{\varepsilon, 2r\} $. From \eqref{b18}, we deduce that 
  \begin{align} 
    & \norm{s, 2, \ast}{u_h^{n - 1}} \leq 2 \sup_{t\in (0, T)} \norm{s, 2, \ast}{u (t)} \\ 
    & \implies 
    \norm{s, 2, \ast}{e_h^n}^2 \leq \left(1 + \alpha_1 \Delta t \right) \norm{s, 2, \ast}{e_h^{n - 1}}^2 + \beta_1 \Delta t^{1 + \bar{\varepsilon}} \label{b15} 
  \end{align} 
  for some positive constants $ \alpha_1 $, depending on $ \norm{L^\infty(W^{s+1, \infty})}{u} $, 
  and $ \beta_1 $, depending on 
  $ \norm{L^\infty(W^{s+1, \infty})}{u} $, $ \norm{L^\infty (H^q)}{u} $ and $ K (u) $. 

  \paragraph{Step 2.} 
  In this step, we prove that there exist constants $ \Delta t_0 \in (0,1) $ and $ h_0 \in (0,1) $ such that 
  \begin{align} 
    & (h, \Delta t) \in (0, h_0) \times (0, \Delta t_0) \\
    & \implies \norm{s, 2, \ast}{u_h^n} \leq 2 \sup_{t\in (0, T)} \norm{s, 2, \ast}{u (t)} \quad \forall \, n = 0,\ldots,N_t. \label{b14} 
  \end{align} 

  Let $ (h_0, \Delta t_3) \in (0,1)^2 $ satisfy 
  \begin{align} 
    \beta_1 T \mathrm{e}^{\alpha_1 T} (\Delta t_3)^{\bar{\varepsilon}} \leq \frac{1}{2} \sup_{t\in (0, T)} \norm{s, 2, \ast}{u (t)}^2 \label{b16} 
  \end{align} 
  and 
  \begin{align} 
    (C_0)^2 \mathrm{e}^{\alpha_1 T} (h_0)^{2(q - s)} \leq \frac{1}{2} \sup_{t\in (0, T)} \norm{s, 2, \ast}{u (t)}^2. \label{b17} 
  \end{align} 

  We recall that $ \Delta t_{h,2}^n $ depends only on $ \snorm{1, \infty}{u_h^{n-1}} $, $ \norm{s,2,\ast}{u_h^{n-1}} $ and $ \norm{L^\infty (W^{s+1, \infty})}{u} $. Therefore, there exists a positive constant $ \Delta t_2 $, depending only on $ \norm{L^\infty(W^{s+1, \infty})}{u} $ and $ \norm{s + 1, \infty}{f} $, such that 
  \begin{align} 
    & \norm{s, 2, \ast}{u_h^{n - 1}} \leq 2 \sup_{t\in (0, T)} \norm{s, 2, \ast}{u (t)} 
    \implies \Delta t_{h,2}^n \leq \Delta t_2, \quad \forall \, n = 1,\ldots, N_t. 
  \end{align} 
  Let $ \Delta t_0 = \min \{\Delta t_2, \Delta t_3\} $. In the following, we assume $ (h, \Delta t) \in (0, h_0) \times (0, \Delta t_0) $. 

  We prove \eqref{b14} by induction. We first consider the case $ n = 0 $. By \eqref{b0} and \eqref{b17}, we have 
  \begin{align} 
    \norm{s,2,\ast}{e_h^0} \leq C_0 h^{q - s} \leq \frac{1}{\sqrt{2}} \sup_{t\in (0, T)}\norm{s, 2, \ast}{u (t)}. 
  \end{align} 
  Since $ u_h^0 = u_0 - e_h^0 $, we have 
  \begin{align} 
    \norm{s,2,\ast}{u_h^0} \leq \left(1 + \frac{1}{\sqrt{2}}\right) \sup_{t\in (0, T)}\norm{s, 2, \ast}{u (t)}, 
  \end{align} 
  which implies that
  \begin{align}
    \norm{s, 2, \ast}{u_h^n} \leq 2 \sup_{t\in (0, T)} \norm{s, 2, \ast}{u (t)} \label{b14-3} 
  \end{align}
  holds for $ n = 0 $. 

  Next, suppose that \eqref{b14-3} holds for all $ n = 0, \ldots, k $. By using \eqref{b15} recursively, we obtain 
  \begin{align}
    \norm{s, 2, \ast}{e_h^{k + 1}}^2 
    & \leq \left(1 + \alpha_1 \Delta t \right)^{k + 1} \norm{s, 2, \ast}{e_h^0}^2 + \sum_{n = 0}^{k} \left(1 + \alpha_1 \Delta t \right)^{n} \beta_1 \Delta t^{1 + \bar{\varepsilon}}. \label{b29} 
  \end{align} 
  Applying the inequalities
  \begin{gather}
    n \Delta t \leq T, \label{ea2} \\ 
    \left(1 + \alpha_1 \Delta t \right)^{n} \leq \mathrm{e}^{\alpha_1 T} \label{ea} 
  \end{gather}
  and \eqref{b0} to \eqref{b29}, we have 
  \begin{align} 
    \norm{s, 2, \ast}{e_h^{k + 1}}^2 & \leq (C_0)^2 \mathrm{e}^{\alpha_1 T} h^{2 (q - s)} + \beta_1 T \mathrm{e}^{\alpha_1 T} \Delta t^{\bar{\varepsilon}}. \label{b29-2} 
  \end{align} 
  Applying \eqref{b16} and \eqref{b17} to \eqref{b29-2}, we have 
  \begin{align} 
    \norm{s, 2, \ast}{e_h^{k + 1}}^2 \leq \sup_{t\in (0, T)} \norm{s, 2, \ast}{u (t)}^2, 
  \end{align} 
  which yields 
  \begin{align} 
    \norm{s, 2, \ast}{u_h^{k + 1}} & \leq \norm{s, 2, \ast}{u^{k + 1}} + \norm{s, 2, \ast}{e_h^{k + 1}} \\ 
    & \leq 2 \sup_{t\in (0, T)} \norm{s, 2, \ast}{u (t)}. 
  \end{align} 
  Therefore, we conclude that \eqref{b14} holds. 

  \paragraph{Step 3.} 
  We assume $ (h, \Delta t) \in (0, h_0) \times (0, \Delta t_0) $. As we have proved in Step 2, we have the bound \eqref{b14-3}. Then, \eqref{b20} becomes 
  \begin{align} 
    \norm{s, 2, \ast}{e_h^n}^2 & \leq \left(1 + C_{\mathrm{P}} (\norm{L^\infty (W^{s+1,\infty})}{u}) \Delta t \right) \norm{s, 2, \ast}{e_h^{n - 1}}^2 \\ & \quad + C_{\mathrm{P}} (\norm{L^\infty (W^{s+1,\infty})}{u}) \Delta t \norm{s, 2, \ast}{\tau_{\Delta t}^{n}}^2 + C\left(\frac{h^{2q}}{\Delta t} + h^{2 (q - s)}\right)\snorm{q, 2}{u^n}^2. \label{b20-3}
  \end{align} 
  Applying \eqref{ad3i} to \eqref{b20-3}, we have 
  \begin{align} 
    \norm{s, 2, \ast}{e_h^n}^2 & \leq \left(1 + \alpha_1 \Delta t \right) \norm{s, 2, \ast}{e_h^{n - 1}}^2 \\ & \quad + \beta_2 \left(K (u)^2 \Delta t^{1 + 2r} + \frac{h^{2q}}{\Delta t} + h^{2 (q - s)}\right), \label{b40} 
  \end{align} 
  where $ \alpha_1 $ is the constant introduced in \eqref{b15}, and $ \beta_2 $ is a constant which depends on $ \norm{L^\infty (W^{s+1,\infty})}{u} $ and $ \norm{L^{\infty}(H^{q})}{u} $. Under the condition \eqref{hyp:d}, we have $ h^{2q} / \Delta t \leq h^{2 (q - s)} $. Therefore \eqref{b40} becomes 
  \begin{align} 
    \norm{s, 2, \ast}{e_h^n}^2 & \leq \left(1 + \alpha_1 \Delta t \right) \norm{s, 2, \ast}{e_h^{n - 1}}^2 + \beta_2 \left(K (u)^2 \Delta t^{1 + 2r} + h^{2 (q - s)}\right). \label{b40-2} 
  \end{align} 
  Using \eqref{b40-2} recursively, we obtain 
  \begin{align} 
    \norm{s, 2, \ast}{e_h^n}^2 & \leq \left(1 + \alpha_1 \Delta t \right)^{n} \norm{s, 2, \ast}{e_h^0}^2 \\ & \quad + \beta_2 \sum_{k = 0}^{n - 1} \left(1 + \alpha_1 \Delta t \right)^k \left(K(u)^2 \Delta t^{1 + 2r} + h^{2 (q - s)}\right). \label{b40-3} 
  \end{align} 
  Applying \eqref{ea2}, \eqref{ea} and \eqref{b0} to \eqref{b40-3}, we have 
  \begin{align} 
    \norm{s, 2, \ast}{e_h^n}^2 & \leq C_0^2 \mathrm{e}^{\alpha_1 T} h^{2 (q - s)} + \beta T \mathrm{e}^{\alpha_1 T} \left(K(u)^2 \Delta t^{2r} + \frac{h^{2 (q - s)}}{\Delta t}\right), 
  \end{align} 
  which implies \eqref{b30}. 
\end{proof} 

\begin{proof}[Proof of Theorem~\ref{thm:2}] 
  In this proof, we do not write the dependence on $ \norm{s + 1, \infty}{f} $ explicitly. In the following, assume that $ \Delta t \leq 1 $. By (P\ref{p:s}) and (P\ref{p:q}), we have 
  \begin{align} 
    \norm{s, 2, \Delta}{e_h^0} & = \left(\norm{0,2}{(I - \mathcal{I}_{h}) u_0}^2 + \frac{h^{2s}}{\Delta t} \snorm{s,2}{(I - \mathcal{I}_{h}) u_0}^2\right)^{1/2} \\ 
    & \leq \left(C h^{2q} \snorm{q,2}{u_0}^2 + C \frac{h^{2q}}{\Delta t} \snorm{q,2}{u_0}^2\right)^{1/2} \\ 
    & \leq C \frac{h^q}{(\Delta t)^{1/2}} \snorm{q, 2}{u_0}. \label{b25} 
  \end{align} 
  Combining \eqref{b12-1} and \eqref{b12-2}, we have 
  \begin{align} 
    \norm{s, 2, \Delta}{e_h^n}^2 & = \norm{0, 2}{e_h^n}^2 + \frac{h^{2 s}}{\Delta t} \snorm{s, 2}{e_h^n}^2 \\ 
    & \leq (1 + 3 \Delta t) \norm{s, 2, \Delta}{\mathcal{I}_{h} \eta_h^n}^2 + \frac{h^{2s}}{\Delta t} \snorm{s, 2}{\rho_h^n}^2 \\ 
    & \quad + \frac{6}{\Delta t} \left(\norm{s, 2, \Delta}{\mathcal{I}_{h} \theta_h^n}^2 + \Delta t^2 \norm{s, 2, \Delta}{\mathcal{I}_{h} \tau_{\Delta t}^n}^2 + \norm{0, 2}{\rho_h^n}^2\right). 
  \end{align} 
  By \eqref{lem2}, we have 
  \begin{align} 
    \norm{s, 2, \Delta}{e_h^n}^2 & \leq (1 + C \Delta t) \norm{s, 2, \Delta}{\eta_h^n}^2 + \frac{C}{\Delta t} \norm{s, 2, \Delta}{\theta_h^n}^2 \\ 
    & \quad + 6 \Delta t \norm{s, 2, \Delta}{\tau_{\Delta t}^n}^2 + \frac{6}{\Delta t} \norm{0, 2}{\rho_h^n}^2 + \frac{h^{2s}}{\Delta t} \snorm{s, 2}{\rho_h^n}^2. \label{b24} 
  \end{align} 

  We estimate $ \eta_h^n $. By \eqref{c2-2} and \eqref{cs1}, we have 
  \begin{align} 
    \norm{s, 2, \Delta}{\eta_h^n} 
    & = \norm{s, 2, \Delta}{(\mathcal{S}_{\Delta t}^{\mathrm{V}} e_h^n \circ {X_{\Delta t}^1} [f \circ (\mathcal{S}_{f, \Delta t}^{\mathrm{A}}  \mathcal{S}_{\Delta t}^{\mathrm{V}} u_h^{n - 1})])} \\ 
    & \leq \left(1 + C_{\mathrm{P}} (\norm{s, 2, \ast}{\mathcal{S}_{f, \Delta t}^{\mathrm{A}}  \mathcal{S}_{\Delta t}^{\mathrm{V}} u_h^{n - 1}}) \Delta t\right) \norm{s, 2, \Delta}{\mathcal{S}_{\Delta t}^{\mathrm{V}} e_h^{n - 1}}. \label{b41} 
  \end{align}
  Applying \eqref{sv5} and \eqref{b8} to \eqref{b41}, we have 
  \begin{align}
    \norm{s, 2, \Delta}{\eta_h^n} \leq \left(1 + C_{\mathrm{P}} (\norm{s, 2, \ast}{u_h^{n - 1}}) \Delta t\right) \norm{s, 2, \Delta}{e_h^{n - 1}}. \label{b22-2} 
  \end{align} 

  Let $ h_0 $ and $ \Delta t_0 $ be the constants introduced in Step 2 of the proof of Theorem~\ref{thm:s}. In the following, we assume that $ (h, \Delta t) \in (0, h_0) \times (0, \Delta t_0) $. Then, we have the bound \eqref{b14-3}. Therefore, by \eqref{b22-2}, there exists a positive constant $ C_{\mathrm{P}}^{(8)} $ depending only on $ \sup_{t\in (0, T)} \norm{s, 2, \ast}{u^{n - 1}} $ such that 
  \begin{align} 
    \norm{s, 2, \Delta}{\eta_h^n} \leq \left(1 + C_{\mathrm{P}}^{(8)} \Delta t\right) \norm{s, 2, \Delta}{e_h^{n - 1}}. \label{b22} 
  \end{align} 

  Let us estimate $ \tau_{\Delta t}^n $. From \eqref{ad3i} and \eqref{hyp:d}, we deduce that 
  \begin{align} 
    \norm{s,2,\Delta}{\tau_{\Delta t}^n} & = \left(\norm{0,2}{\tau_{\Delta t}^n}^2 + \frac{h^{2s}}{\Delta t} \snorm{s,2}{\tau_{\Delta t}^n}^2\right)^{1/2} \\ 
    & \leq \left(1 + \frac{h^{2s}}{\Delta t}\right)^{1/2} \norm{s,2,\ast}{\tau_{\Delta t}^n} \\ 
    & \leq C K (u) \Delta t^r. \label{b32r} 
  \end{align} 

  We estimate $ \theta_h^n $. By \eqref{d22}, we have 
  \begin{align} 
    & \norm{s, 2, \Delta}{\theta_h^n} \\ 
    & = \norm{s, 2, \Delta}{(\mathcal{S}_{\Delta t}^{\mathrm{V}} u^{n - 1}) \circ {X_{\Delta t}^1}[f \circ u^n] - (\mathcal{S}_{\Delta t}^{\mathrm{V}} u^{n - 1}) \circ {X_{\Delta t}^1}[f \circ (\mathcal{S}_{f, \Delta t}^{\mathrm{A}}  \mathcal{S}_{\Delta t}^{\mathrm{V}} u_h^{n - 1})]} \\
    & \leq C_{\mathrm{P}} (\norm{s, 2, \ast}{u^n}, \norm{s, 2, \ast}{\mathcal{S}_{f, \Delta t}^{\mathrm{A}}  \mathcal{S}_{\Delta t}^{\mathrm{V}} u_h^{n - 1}}) \Delta t \norm{s + 1, \infty}{\mathcal{S}_{\Delta t}^{\mathrm{V}} u^{n - 1}} \\ & \qquad \cdot \norm{s, 2, \Delta}{u^n - \mathcal{S}_{f, \Delta t}^{\mathrm{A}}  \mathcal{S}_{\Delta t}^{\mathrm{V}} u_h^{n - 1}}. \label{b42}
  \end{align}
  Applying \eqref{ad2i}, \eqref{b8} and \eqref{cs6} to \eqref{b42}, we have 
  \begin{align} 
    \norm{s, 2, \Delta}{\theta_h^n} & \leq C_{\mathrm{P}} (\norm{L^\infty(W^{s+1, \infty})}{u}, \norm{s, 2, \ast}{u_h^{n - 1}}) \Delta t \\ & \qquad \cdot (\norm{s, 2, \Delta}{\eta_h^n} + \norm{s, 2, \Delta}{\theta_h^n} + \Delta t \norm{s, 2, \Delta}{\tau_{\Delta t}^n}). 
  \end{align} 
  From the bound \eqref{b14-3} for the numerical solution, we deduce that there exists a positive constant $ C^{(9)} $, independent of the numerical solution $ u_h^n $, such that 
  \begin{align} 
    \norm{s, 2, \Delta}{\theta_h^n} \leq C^{(9)} \Delta t (\norm{s, 2, \Delta}{\eta_h^n} + \norm{s, 2, \Delta}{\theta_h^n} + \Delta t \norm{s, 2, \Delta}{\tau_{\Delta t}^n}). \label{b21} 
  \end{align} 
  We note that $ C^{(9)} $ depend on $\|u\|_{L^\infty(W^{s+1,\infty}(\mathbb{T}))}$. Let $ \Delta t_0^\prime = \min \{1 / (2 C^{(9)}), \Delta t_0 \} $. In the rest of this proof, assume that $ \Delta t \leq \Delta t_0^\prime $. Then we have 
  \begin{align} 
    \frac{1}{1 - C^{(9)} \Delta t} \leq \frac{1}{2}. \label{b23-2} 
  \end{align} 
  Applying \eqref{b23-2} to \eqref{b21}, we have 
  \begin{align} 
    \norm{s, 2, \Delta}{\theta_h^n} \leq \frac{1}{2} C^{(9)} \Delta t (\norm{s, 2, \Delta}{\eta_h^n} + \Delta t \norm{s, 2, \Delta}{\tau_{\Delta t}^n}). \label{b23}
  \end{align} 
  Using \eqref{b23}, \eqref{b24} becomes 
  \begin{align} 
    \norm{s, 2, \Delta}{e_h^n}^2 & \leq (1 + C \Delta t) \norm{s, 2, \Delta}{\eta_h^n}^2 
    + C \Delta t \norm{s, 2, \Delta}{\tau_{\Delta t}^n}^2 \\ & \quad + \frac{6}{\Delta t} \norm{0, 2}{\rho_h^n}^2 + \frac{h^{2s}}{\Delta t} \snorm{s, 2}{\rho_h^n}^2. \label{b24-3} 
  \end{align} 
  By \eqref{b22} and this, there exists a positive constant $ \alpha_2 $, depending only on $ \norm{L^\infty (W^{s+1, \infty})}{u} $, such that 
  \begin{align} 
    \norm{s, 2, \Delta}{e_h^n}^2 & \leq (1 + \alpha_2 \Delta t) \norm{s, 2, \Delta}{e_h^{n - 1}}^2 + C \Delta t \norm{s, 2, \Delta}{\tau_{\Delta t}^n}^2 \\ & \quad + \frac{6}{\Delta t} \norm{0, 2}{\rho_h^n}^2 + \frac{h^{2s}}{\Delta t} \snorm{s, 2}{\rho_h^n}^2. \label{b24-2} 
  \end{align} 
  Applying \eqref{b31-1}, \eqref{b31-2} and \eqref{b32r} to \eqref{b24-2}, we obtain 
  \begin{align}
    \norm{s, 2, \Delta}{e_h^n}^2 
    & \leq (1 + \alpha_2 \Delta t) \norm{s, 2, \Delta}{e_h^{n - 1}}^2 + \beta_3 \left(K (u)^2 \Delta t^{1 + 2 r} + \frac{h^{2 q}}{\Delta t}\right), \label{b43}
  \end{align} 
  where $ \beta_3 $ is a positive constant depending on $ \norm{L^\infty (W^{s+1,\infty})}{u} $ and $ \norm{L^{\infty}(H^{q})}{u} $. Using \eqref{b43} repeatedly, we have 
  \begin{align} 
    \norm{s, 2, \Delta}{e_h^n}^2 \leq (1 + \alpha_2 \Delta t)^n \norm{s, 2, \Delta}{e_h^0}^2 + \sum_{k=0}^{n-1} (1 + \alpha_2 \Delta t)^k \beta_3 \left(K(u)^2 \Delta t^{1 + 2 r} + \frac{h^{2 q}}{\Delta t}\right). 
  \end{align} 
  By \eqref{b25} and $ (1 + \alpha_2 \Delta t)^n \leq e^{\alpha_2 T} $, we have 
  \begin{align} 
    \norm{s, 2, \Delta}{e_h^n}^2 \leq e^{\alpha_2 T} \left(C \frac{h^{2q}}{\Delta t} \snorm{q, 2}{u_0}^2 + \beta_3 T \left(K (u)^2 \Delta t^{2 r} + \frac{h^{2q}}{\Delta t^2}\right)\right), 
  \end{align} 
  which implies \eqref{b26}. 
\end{proof} 

\section{Numerical experiments}\label{sec:test}

We performed numerical experiments for the initial value problem \eqref{ivp1}--\eqref{ivp2} with $ f (u) = u $. 
We define the incomplete Jacobi elliptic integral of first kind by 
\begin{align} 
  F (\varphi, k) = \int_{0}^{\varphi} \frac{\mathrm{d}\theta}{\sqrt{1 - k^2 \sin^2 \theta}} 
\end{align} 
and define the complete Jacobi elliptic integral of first kind by $ K (k) = F (\pi/2, k) $. Let $ \varphi_k: \mathbb{R} \to \mathbb{R} $ be the inverse of $ F (\cdot, k) $. We define the Jacobi elliptic function $ \cn (x,k) $ by 
\begin{align} 
  \cn (x, k) = \cos \varphi_k (x). 
\end{align} 

Let $ \nu = 10^{-3} $. Let the initial value be given by 
\begin{align} 
  u_0 (x) = \frac{1}{10} + \frac{3 \nu}{2 K (1/\sqrt{2})} \cn (2 K (1/\sqrt{2}) x, 1/\sqrt{2}). 
\end{align} 
Then, the exact solution to \eqref{ivp1}--\eqref{ivp2} can be written as 
\begin{align} 
  u_0 (x) = \frac{1}{10} + \frac{3 \nu}{2 K (1/\sqrt{2})} \cn (2 K (1/\sqrt{2}) (x - t / 10), 1/\sqrt{2}). 
\end{align} 
We define the relative $ L^2 $-error by 
\begin{align} 
  \mathcal{E}_h^{N_t} = \frac{\norm{0,2}{u^{N_t} - u_h^{N_t}}}{\norm{0,2}{u^{N_t}}}. 
\end{align} 
We compute the $ L^2 $-norms approximately using the Legendre--Gauss quadrature with seven nodes. In all the numerical experiments below, we used the cubic Hermite interpolation for $ \mathcal{I}_h $ in \eqref{sl1}--\eqref{sl2}. 

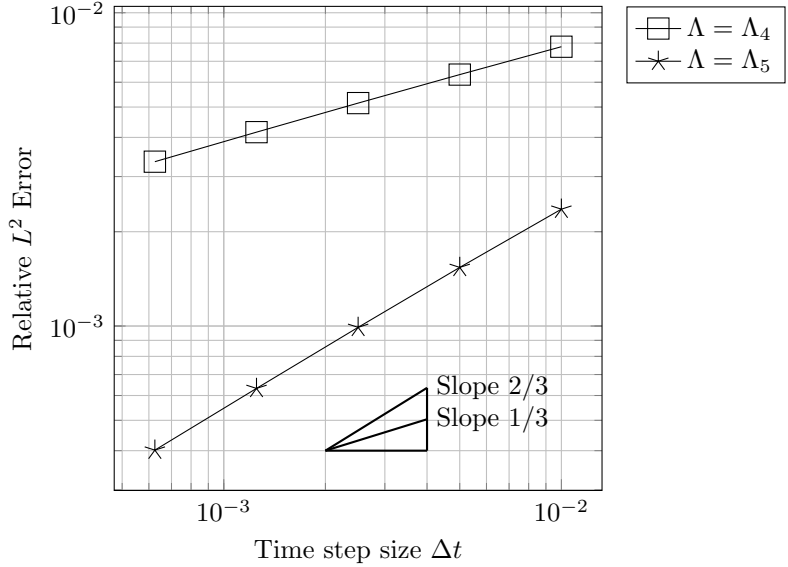
\begin{figure} 
  \centering 
  \begin{tikzpicture} 
    \begin{loglogaxis}[ 
      clip=false, 
      xlabel={Time step size $ \Delta t $}, 
      ylabel={Relative $ L^2 $ Error}, 
      grid=both, 
      width=8cm, 
      height=8cm, 
      legend style={at={(1.05,1)}, anchor=north west} 
    ]
      \addplot[
        mark=square, 
        mark size=4, 
      ] coordinates { 
        (1 /  100, 0.00779164) 
        (1 /  200, 0.00634823) 
        (1 /  400, 0.00514792) 
        (1 /  800, 0.00415775) 
        (1 / 1600, 0.00334669) 
      }; 
      \addlegendentry{$ \Lambda = \Lambda_4 $} 
      \addplot[
        mark=star, 
        mark size=4, 
      ] coordinates { 
        (1 /  100, 0.00236038 ) 
        (1 /  200, 0.00153862 ) 
        (1 /  400, 0.00099024 ) 
        (1 /  800, 0.000632204) 
        (1 / 1600, 0.000401607) 
      }; 
      \addlegendentry{$ \Lambda = \Lambda_5 $} 
      \draw[thick] (axis cs:\pxa,  \pya) -- (axis cs:{\pxa *2},{\pya *pow(2,1/3)}); 
      \draw[thick] (axis cs:\pxa,  \pya) -- (axis cs:{\pxa *2},{\pya *pow(2,2/3)}); 
      \draw[thick] (axis cs:\pxa,  \pya) -- (axis cs:{\pxa *2},{\pya}); 
      \draw[thick] (axis cs:{\pxa *2},{\pya *pow(2,2/3)}) -- (axis cs:{\pxa *2},{\pya}); 
      \node[anchor=west] at (axis cs:{\pxa *2},{\pya *pow(2,1/3)}) {Slope $ 1/3 $}; 
      \node[anchor=west] at (axis cs:{\pxa *2},{\pya *pow(2,2/3)}) {Slope $ 2/3 $}; 
    \end{loglogaxis} 
  \end{tikzpicture} 
  \caption{Relative $ L^2 $-error versus the time step $ \Delta t $. The mesh is uniform in both space and time, and the spatial mesh size is fixed at $ h = 1 / 1000$. The $ L^2 $-errors are evaluated at $ T = 1 $. } 
  \label{fig:1} 
\end{figure} 
\begin{figure} 
  \centering 
  \begin{tikzpicture} 
    \begin{loglogaxis}[ 
      clip=false, 
      xlabel={Spatial mesh size $ h $}, 
      ylabel={Relative $ L^2 $ Error}, 
      grid=both, 
      width=8cm, 
      height=6cm, 
      legend style={at={(1.05,1)}, anchor=north west} 
    ] 
      \addplot[ 
        mark=star, 
        mark size=4, 
      ] coordinates {  
        (1 /  16,  0.00864922) 
        (1 /  32,  0.00391716) 
        (1 /  64,  0.00146543) 
        (1 / 128, 0.000501557) 
        (1 / 256, 0.000167431) 
        (1 / 512, 5.54486e-05) 
      }; 
      \addlegendentry{$ \Lambda = \Lambda_5 $} 
      \draw[thick] (axis cs:\pxb,  \pyb) -- (axis cs:{\pxb *2},{\pyb *pow(2,8/5)}); 
      \draw[thick] (axis cs:\pxb,  \pyb) -- (axis cs:{\pxb *2},{\pyb}); 
      \draw[thick] (axis cs:{\pxb *2},{\pyb *pow(2,8/5)}) -- (axis cs:{\pxb *2},{\pyb}); 
      \node[anchor=west] at (axis cs:{\pxb *2},{\pyb *pow(2,8/5)}) {Slope $ 8/5 $}; 
    \end{loglogaxis} 
  \end{tikzpicture} 
  \caption{Relative $ L^2 $-error versus the spatial mesh size $ h $. The mesh is uniform in both space and time. The time step size is given by $ \Delta t = 100 h^{12/5} $. The $ L^2 $-norm is evaluated at $ T = N_t \Delta t $, where $ N_t = \lfloor 1 / \Delta t \rfloor $. } 
  \label{fig:2} 
\end{figure}
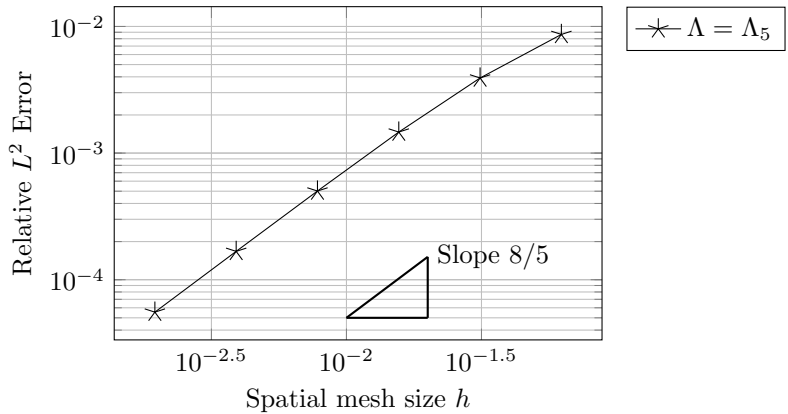 

Figure \ref{fig:1} shows the graph of the relative $ L^2 $-error versus the time step size $ \Delta t $. In these experiments, the spatial mesh size is fixed. We observe that the $ L^2 $-errors of the schemes with $ \Lambda = \Lambda_4 $ and $ \Lambda = \Lambda_5 $ are approximately $ O(\Delta t^{1/3}) $ and $ O(\Delta t^{2/3}) $, respectively. There convergence rates also appear in the theoretical error estimates. 

In Figure \ref{fig:2}, we tested the scheme with $ \Lambda = \Lambda_5 $. The discretization parameters $ h $ and $ \Delta t $ are given by $ h = 16, 32, \ldots, 512 $ and $ \Delta t = 100 h^{12/5} $. The integer $ q $ in Theorem \ref{thm:2} is $ 4 $ when we use the cubic Hermite interpolation. Thus, the error bound in Theorem \ref{thm:2} can be written as $ \mathcal{E}_h^{N_t} = O (\Delta t^{2/3} + h^{4} / \Delta t) $. We note that when $ \Delta t $ is proportional to $ h^{12/5} $, $ \Delta t^{2/3} $ is proportional to $ h^{4} / \Delta t $. Therefore, in this setting, the theoretical error bound is given by $ \mathcal{E}_h^{N_t} = O (h^{8/5}) $. The results shown in the figure are consistent with this theoretical error estimate. 

\providecommand{\bysame}{\leavevmode\hbox to3em{\hrulefill}\thinspace}
\providecommand{\MR}{\relax\ifhmode\unskip\space\fi MR }
\providecommand{\MRhref}[2]{%
  \href{http://www.ams.org/mathscinet-getitem?mr=#1}{#2}
}
\providecommand{\href}[2]{#2}

\end{document}